\documentclass{amsart}

\usepackage{amsmath, amsthm, amscd, amsfonts, amssymb, enumerate, verbatim, newlfont, calc, graphicx, color}
\usepackage[bookmarksnumbered, colorlinks, plainpages]{hyperref}
\usepackage{tikz}
 \usepackage{doi}

\newtheorem{theorem}{Theorem}[section]       
\newtheorem{question}[theorem]{Question}    
 
\newtheorem{lemma}[theorem]{Lemma}
\newtheorem{proposition}[theorem]{Proposition}
\newtheorem{corollary}[theorem]{Corollary}
\newtheorem{definition}[theorem]{Definition}

\newtheorem{remark}[theorem]{Remark}

\newtheorem{example}[theorem]{Example}

\usepackage{mathtools}

\usepackage{pstricks}
\usepackage{epsfig}
\usepackage{pst-grad}
\usepackage{pst-plot}
\usepackage{subfig}

\textwidth=13cm

\begin{document}

\author[M. Nasernejad,  V. Crispin  Q.  and   J. Toledo]{Mehrdad ~Nasernejad$^{*}$,  Veronica Crispin Qui\~n{o}nez,  Jonathan Toledo}
\title[Normally torsion-freeness and normality criteria] {Normally torsion-freeness and normality  criteria for monomial ideals}
\subjclass[2010]{13B25, 13F20, 05C25, 05E40.} 
\keywords {Normally torsion-free monomial ideals,  Normal  monomial ideals, Embedded associated primes, Clutters, Polarization.}

\thanks{$^*$Corresponding author}

\thanks{E-mail addresses:  m$\_$nasernejad@yahoo.com, veronica.crispin@math.uu.se, and  jonathan.tt@itvalletla.edu.mx}  
\maketitle

\vspace{0.4cm}

\begin{abstract}
In this paper,  we focus on  the associated  primes of powers of monomial ideals  and   asymptotic behavior properties such as normally torsion-freeness, normality, the strong persistence property, and the  persistence property. In particular, we  introduce the concept of monomial ideals of well-nearly normally torsion-free type, and show that these  ideals are normal. After that, we present  some results on  the existence of   embedded associated prime ideals  in the associated primes set of powers of monomial ideals. Further,  we employ  them in investigating  the edge and cover ideals of cones of graphs.  Next, we present counterexamples to several questions concerning the relations  between relevant algebraic properties of the edge ideals of clutters and complement clutters.
 We conclude by providing counterexamples to questions on the possible connections  between normally torsion-freeness and normality of monomial ideals under polarization.  
\end{abstract}
\vspace{0.4cm}


\section{Introduction and Preliminaries}

Let $R$ be a commutative  Noetherian ring and $I$ be an ideal of $R$. A well-known result of  Brodmann \cite{BR} showed that the sequence $\{\mathrm{Ass}_R(R/I^k)\}_{k \geq 1}$ of associated prime ideals is stationary  for large $k$. In other words,  there exists a positive integer $k_0$ such that $\mathrm{Ass}_R(R/I^k)=\mathrm{Ass}_R(R/I^{k_0})$ for all $k\geq k_0$. The  minimal such $k_0$ is called the
{\it index of stability} of  $I$,  and $\mathrm{Ass}_R(R/I^{k_0})$ is called the {\it stable set } 
 of associated prime ideals of  $I$, which is denoted by $\mathrm{Ass}^{\infty }(I).$ In the context of Brodmann's result, one can ask whether it is  true that $\mathrm{Ass}_R(R/I^k)\subseteq \mathrm{Ass}_R(R/I^{k+1})$ for all $k\geq 1$? McAdam \cite{Mc} presented an example which says, in general, the  question has a negative answer. We say that an ideal $I$ of $R$  satisfies the {\it persistence property} if  $\mathrm{Ass}_R(R/I^k)\subseteq \mathrm{Ass}_R(R/I^{k+1})$ for all  $k\geq 1$.  Along this  argument,  an ideal  $I$ of $R$ has the {\it strong persistence property}  if $(I^{k+1}:_R I)=I^k$ for all  $k\geq 1$. Ratliff showed in \cite{RA} that $(I^{k+1}:_R I)=I^k$ for all large $k$. Note that the strong  persistence property implies the persistence property (for example see  \cite[Proposition 2.9]{N1}), however, the converse does  not hold (see \cite[Example 2.18]{MMV}). In the sequel, the persistence property will often be shortened as by PP and the strong equivalent by SPP.

Let   now   $I$ be  a monomial  ideal  in  a  polynomial ring $R=K[x_1,\ldots,x_n]$ over a field $K.$  In general, monomial ideals do not satisfy PP, see
 \cite{HH2}   for a counterexample.
 In order to further investigate PP for monomial ideals, their role in connecting commutative algebra with combinatorics comes to benefit, for instance, through combinatorial objects such as (hyper)graphs, simplicial complexes and posets. In particular, combinatorial methods have been applied to find classes of monomial ideals possessing PP (see \cite{N2}) as well as counterexamples in the monomial case (see \cite{KSS}  and \cite{MV}).

One of the innovators  in this area is  Villarreal, who in \cite{VI1} introduced the concept of the {\em edge ideal} associated to a simple finite graph $G$, which is denoted by $I(G)$ and is generated by the monomials $x_ix_j$ for every edge $\{x_j, x_j\}$ of $G$. Further, the Alexander dual of $I(G)$ is called the {\em cover ideal} of $G$ and is denoted by $I(G)^{\vee}$. 
 Next,   path ideals of graphs were first introduced by Conca  and De Negri \cite{CD} in the context of  monomial ideals of linear type. 
  So,  we have the following subquestion:

  Which classes of monomial ideals have the persistence property?\\
  The answer is affirmative, when $I$ is the cover ideal of a perfect graph (see \cite[Corollary 5.11]{FHV}) or the edge ideal of a finite graph, simple or with loops (see \cite[Theorem 2.15]{MMV} and \cite[Corollary 2]{RT}, respectively).  Moreover, any polymatroidal ideal has PP (see \cite[Proposition 3.3]{HRV}).

  There are two more concepts related to PP. The first one was presented in  \cite{RT}: an  ideal $I$ in a commutative Noetherian ring $R$ has the  {\it symbolic strong persistence property} if $(I^{(k+1)}:_R I^{(1)})=I^{(k)}$ for all $k\geq 1$, where $I^{(k)}=\bigcap_{\mathfrak{p}\in \mathrm{Min}(I)}(I^kR_\mathfrak{p}\cap R)$ denotes the  $k$-th symbolic  power  of $I$. 
  In the same paper the authors show that this property follows from SPP. However, little is known about other classes of monomial ideals which satisfy the symbolic SPP, see  \cite{NKRT}. 

The second related concept is the following: an ideal $I$ in a commutative Noetherian ring $R$ is called {\it normally torsion-free} if $\mathrm{Ass}(R/I^k)\subseteq \mathrm{Ass}(R/I)$ 
  for all $k\geq 1$.  Examples of normally torsion-free monomial ideals appear from graph theory.   Already in 1994, it was proved in  \cite{SVV} 
   that  a finite simple graph $G$ is bipartite if and only if its edge ideal is normally torsion-free.  Later it was shown in  \cite{GRV} 
    that the cover ideals of bipartite graphs are normally torsion-free.  Another example of the connection to graph theory is that 
   normally torsion-free square-free monomial ideals are closely related to Mengerian hypergraphs.    
   Although normally  torsion-free square-free monomial  ideals  have been 
    studied  in \cite{KHN1, NQH, SU},  little is known for the   normally torsion-free monomial ideals which are not square-free, for example,  see \cite{OL}.

We now turn our attention to the notion of the normality of ideals.  An element $f\in R$ is {\it integral} over an ideal $I\subset R$, if  there exists an equation  $$f^k+c_1f^{k-1}+\cdots +c_{k-1}f+c_k=0 ~~\mathrm{with} ~~ c_i\in I^i.$$
The {\em integral closure} of $I$ is $\bar I$, the set of the elements in $R$ which are integral over $I$.
 The ideal $I$ is  called {\it integrally closed}, if $I=\overline{I}$, and $I$ is said to be  {\it normal} if all powers of $I$ are integrally closed, see \cite{SH} for more information. 
  If   $I$ is  a monomial ideal in a polynomial ring $R$, then  $\overline{I}$ is the monomial ideal generated by all monomials $u \in R$ for which there exists an integer $k$ such that   $u^{k}\in I^{k}$, refer to    \cite[Theorem 1.4.2]{HH1}.  \par

  The last related definition for a monomial ideal in $R$ is as follows: $I$ is called   {\it nearly normally torsion-free}  if there exist a positive integer $k$ and a monomial prime ideal  $\mathfrak{p}$ such that $\mathrm{Ass}_R(R/I^m)=\mathrm{Min}(I)$ for all $1\leq m\leq k$, and
 $\mathrm{Ass}_R(R/I^m) \subseteq \mathrm{Min}(I) \cup \{\mathfrak{p}\}$ for all $m \geq k+1$, where  $\mathrm{Min}(I)$  denotes  the set of minimal prime ideals of $I$, see \cite[Definition 2.1]{Claudia}. For some classes of nearly normally torsion-free monomial ideals, see \cite{NBR, NQBM}. 
 
 One important implication is that a normal monomial ideal has SPP by  \cite[Theorem 6.2]{RNA}.  
   This result is used in \cite{NKA}, where SPP is shown  through normality in the cover ideals of the following imperfect graphs: (1) cycle graphs of odd orders, (2) wheel graphs of even orders, and (3) Helm graphs of odd orders with greater than or equal to $5$. 
  In particular,  more information related to the normality of the cover ideals of imperfect graphs  can be found in \cite{ANR1, ANR2, NBR}.
 Moreover, based on \cite[Theorem 1.4.6]{HH1}, every normally torsion-free square-free monomial ideal is normal. Hence, in the case of square-free monomial ideal, we have: normally torsion-freeness $\Rightarrow$   normality $\Rightarrow$ 
  strong persistence property  $\Rightarrow$  persistence property.  
 
 The main aim of this work  is to continue these research lines and identify  the other classes of monomial ideals that satisfy PP, SPP,  normality, and normally torsion-freeness. In particular, we present  some criteria for normally torsion-freeness and normality of  monomial ideals. In addition, we obtain  some results related to the appearance of the graded  maximal ideal in the associated primes of powers of monomial ideals.

Throughout this paper, $R=K[x_1, \ldots, x_n]$ stands for a polynomial  ring over a field $K$, unless stated otherwise.  Also, $\mathcal{G}(I)$ denotes the unique minimal set of monomial generators of a monomial ideal $I\subset R$.  
The {\em support} of a monomial $u\in R$, denoted by $\mathrm{supp}(u)$, is the set of variables that divide $u$. For a monomial ideal $I$, we set $\mathrm{supp}(I)=\bigcup_{u \in \mathcal{G}(I)}\mathrm{supp}(u)$.

The structure of this paper is outlined as follows: In Section 2, we establish a connection between the presence of a prime monomial ideal in the associated
prime set of powers of a monomial ideal and the cardinality of a maximal independent set of the monomial ideal, as demonstrated in Corollary \ref{Cor.1}.
Subsequently, our attention shifts to examining the normality of monomial ideals within a subset of nearly normally torsion-free square-free monomial
ideals known as well-nearly normally torsion-free type. In particular, we present the following theorem:

 \bigskip
 \textbf{Theorem  2.12.}
 Let $I\subset R=K[x_1, \ldots, x_n]$  be   of  well-nearly  normally torsion-free  type. Then   $I$ is normal, and so has (symbolic) (strong) persistence property.
 
\bigskip  
 Section 3  is concerned with  the existence of   embedded associated prime ideals  in the associated primes set of powers of monomial ideals, 
 see Propositions    \ref{Pro.Maximal1} and   \ref{Pro.Maximal2}.  In particular, we show that these consequences can be employed in studying the edge and cover ideals of cones of graphs as follows: 
 
 \bigskip
\textbf{Lemma 3.5.}
 Let   $G$  be  a  connected graph  and    $H:=C(G)$  be  its cone. Then the following statements hold:
 \begin{itemize}
 \item[(i)] If   $(x_i : i\in V(G))\in \mathrm{Ass}(J(G)^t)$ for some $t\geq 1$,  then  $(x_j : j\in V(H))\in \mathrm{Ass}(J(H)^{t+1})$, 
 where $J(G)$  and $J(H)$ denote the cover ideals of $G$ and $H$, respectively. 
 
 \item[(ii)]  If  $(x_i : i\in V(G))\in \mathrm{Ass}(I(G)^t)$  for some  $t\geq 2$,  then   $(x_j : j\in V(H))\in \mathrm{Ass}(I(H)^{t})$,  
 where $I(G)$   and $I(H)$ denote the edge ideals of $G$ and $H$, respectively. 
 \end{itemize}
  
Now, let $I$ be  a normally torsion-free square-free  monomial ideal with $\mathcal{G}(I)=\{u_1, \ldots, u_m\}$. Let us  delete one of the members of $\mathcal{G}(I)$, say $u_m$, and add a new square-free monomial, say $v$. There is no guarantee that  the new square-free monomial
 ideal $(u_1, \ldots, u_{m-1})+(v)$ is still normally torsion-free.  
 We close this section with giving a  result, which says that  under certain conditions the square-free monomial ideal $(u_1, \ldots, u_{m-1})+(v)$  is normally torsion-free as well, refer to Theorem   \ref{NTF3}.

Section 4  is devoted to  comparing  some algebraic properties of the edge ideals of  clutters and complement clutters, see  Question  \ref{Complement-Clutter}.
 Section 5  deals with the  normally torsion-freeness and normality of monomial ideals under the polarization operator, refer to Question \ref{POLARIZATION-NTF}. 


\section{Normality of nearly normally torsion-free  monomial ideals}

In this section, we explore the relationship between the presence of a prime monomial ideal in the associated prime set of powers of a monomial ideal and
the size of a maximal independent set of the monomial ideal, as elucidated in Corollary \ref{Cor.1}. Following this, we investigate the normality of monomial ideals
within a specific class comprising nearly normally torsion-free square-free monomial ideals, as outlined in Theorem \ref{Normal+Well-NNTF}. Additionally, we will utilize the following proposition, which serves as an extension of  \cite[Theorem 3.5]{HM}  and \cite[Theorem 3.4]{SNQ}, in the proof of 
 Proposition \ref{Lem.Distinct-Variables}.

 To capture  Proposition  \ref{Lem.Distinct-Variables} and Corollary \ref{Cor.1},  we need to recollect  some definitions. Assume that $I$ is  a square-free monomial ideal and $\Gamma \subseteq \mathcal{G}(I)$, where $\mathcal{G}(I)$ denotes the unique minimal set of monomial generators of the  monomial ideal $I$. We say that $\Gamma$ is an {\it independent} set in $I$ if $\mathrm{gcd}(f,g)=1$ for each $f,g\in \Gamma$ with $f\neq g$. We denote the maximum cardinality of an independent set in $I$ by $\beta_1(I)$. 
Furthermore, if $I$ is a monomial ideal, then the  {\it deletion} of $x_i$  from  $I$ with $1\leq i \leq n$, denoted by $I\setminus x_i$, is obtained by setting $x_i=0$ in every minimal generator of $I$, that is, we delete every minimal generator such as $u\in \mathcal{G}(I)$ with $x_i\mid u$, refer to 
  \cite[page 303]{HM} for  definitions of deletion and independent set.

\begin{proposition}  \label{Pro.Distinct-Variables}
Let $I\subset R=K[x_1, \ldots, x_n]$ be a  monomial ideal,  $t$ a positive integer, $\mathfrak{p}$ a prime monomial ideal in $R$,
  and $y_1, \ldots, y_s$ be distinct variables in $R$  such that, 
for each  $i=1, \ldots, s$, $\mathfrak{p}\setminus y_i \notin \mathrm{Ass}(R/(I\setminus y_i)^t)$, where $I\setminus y_i$ denotes the 
deletion  of $y_i$  from  $I$.  Then  $\mathfrak{p}\in \mathrm{Ass}(R/I^t)$ if and only if $\mathfrak{p}\in \mathrm{Ass}(R/(I^t:\prod_{i=1}^sy_i))$. 
\end{proposition}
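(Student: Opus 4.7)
The plan is to verify the easy direction from a general fact about colons of monomial ideals, and handle the nontrivial direction by induction on $s$, iteratively reducing to the single-variable case \cite[Theorem 3.5]{HM}.

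For the direction $(\Leftarrow)$, suppose $\mathfrak{p}\in\mathrm{Ass}_R(R/(I^t:\prod_{i=1}^s y_i))$; then $\mathfrak{p}=((I^t:\prod_i y_i):v)=(I^t:v\prod_i y_i)$ for some monomial $v$ with $v\prod_i y_i\notin I^t$ (the latter because $\mathfrak{p}$ is a proper ideal). Hence $\mathfrak{p}$ is the annihilator of $v\prod_i y_i + I^t$ in $R/I^t$, so $\mathfrak{p}\in\mathrm{Ass}_R(R/I^t)$. This is the general inclusion $\mathrm{Ass}_R(R/(J:m))\subseteq\mathrm{Ass}_R(R/J)$ specialized to $J=I^t$, $m=\prod_i y_i$, and does not use the deletion hypothesis.

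For the direction $(\Rightarrow)$, I would induct on $s$. The base case $s=1$ is exactly \cite[Theorem 3.5]{HM} (equivalently \cite[Theorem 3.4]{SNQ}). For the inductive step with $s\geq 2$, the $s$-variable hypothesis contains the $(s-1)$-variable hypothesis for $y_1,\ldots,y_{s-1}$, so applying the inductive hypothesis gives
$$\mathfrak{p}\in\mathrm{Ass}_R(R/I^t)\ \Longrightarrow\ \mathfrak{p}\in\mathrm{Ass}_R(R/(I^t:y_1\cdots y_{s-1})).$$
Set $J:=I^t:y_1\cdots y_{s-1}$; applying the base case (with $t=1$, i.e., to the arbitrary monomial ideal $J$) and the variable $y_s$ would then yield
$$\mathfrak{p}\in\mathrm{Ass}_R(R/J)\ \Longrightarrow\ \mathfrak{p}\in\mathrm{Ass}_R(R/(J:y_s))=\mathrm{Ass}_R\!\left(R/(I^t:\textstyle\prod_{i=1}^s y_i)\right),$$
provided the hypothesis $\mathfrak{p}\setminus y_s\notin\mathrm{Ass}_R(R/(J\setminus y_s))$ of \cite[Theorem 3.5]{HM} is met.

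The crux is verifying this hypothesis. Because $y_s$ is distinct from $y_1,\ldots,y_{s-1}$, a direct monomial computation shows
$$J\setminus y_s=(I^t:y_1\cdots y_{s-1})\setminus y_s=((I\setminus y_s)^t:y_1\cdots y_{s-1});$$
indeed, a monomial $u$ with $y_s\nmid u$ satisfies $uy_1\cdots y_{s-1}\in I^t$ iff $uy_1\cdots y_{s-1}\in (I\setminus y_s)^t$, since $uy_1\cdots y_{s-1}$ has no factor of $y_s$ and so any product of $t$ minimal generators of $I$ dividing it can involve only generators of $I\setminus y_s$. Combining this identity with the easy-direction containment $\mathrm{Ass}_R(R/((I\setminus y_s)^t:y_1\cdots y_{s-1}))\subseteq\mathrm{Ass}_R(R/(I\setminus y_s)^t)$ and the hypothesis $\mathfrak{p}\setminus y_s\notin\mathrm{Ass}_R(R/(I\setminus y_s)^t)$ yields $\mathfrak{p}\setminus y_s\notin\mathrm{Ass}_R(R/(J\setminus y_s))$, which closes the induction. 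The main obstacle is this commutativity of deletion with colon and power; it is also essential that \cite[Theorem 3.5]{HM} be read as applicable to any monomial ideal (through the specialization $t=1$), since the auxiliary ideal $J$ from the inductive step is not itself a power in general.
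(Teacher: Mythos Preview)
Your proof is correct and follows essentially the same route as the paper's. Both argue the backward direction via the general inclusion $\mathrm{Ass}(R/(J:m))\subseteq\mathrm{Ass}(R/J)$ and the forward direction by induction on $s$, reducing from $I^t:\prod_{i<s}y_i$ to $I^t:\prod_{i\leq s}y_i$ using the single-variable case together with the key identity relating the $y_s$-deletion of $(I^t:\prod_{i<s}y_i)$ to $((I\setminus y_s)^t:\prod_{i<s}y_i)$. The only difference is packaging: you invoke \cite[Theorem 3.5]{HM} as a black box for the single-variable step, whereas the paper unfolds that argument directly via the short exact sequence $0\to R/(I^t:M)\to R/I^t\to R/(I^t,M)\to 0$ and the equality $((I^t:M),y_s)=(((I\setminus y_s)^t:M),y_s)$ (which is equivalent to your deletion identity), together with the description of $\mathrm{Ass}(R/(L,y_s))$ from \cite[Theorem 5.5]{KHN2}. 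Your remark that the cited single-variable result must be read for arbitrary monomial ideals (via $t=1$) is well taken; the paper sidesteps this by reproving that step from scratch.
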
 
\begin{proof}
($\Leftarrow$) Let $\mathfrak{p}\in \mathrm{Ass}(R/(I^t:\prod_{i=1}^sy_i))$. In view of  \cite[Exercise 2.1.62]{V1}, we have  the following short exact sequence
$$0\longrightarrow R/(I^t:\prod_{i=1}^sy_i)\stackrel{\psi}{\longrightarrow} R/I^t \stackrel{\phi}{\longrightarrow} R/(I^t, \prod_{i=1}^sy_i) \longrightarrow  0,$$
where $\psi(\overline{r})=\overline{r}\prod_{i=1}^sy_i$ and $\phi(\overline{r})=\overline{r}$. It follows also from    \cite[Exercise 9.42]{sharp} that 
\begin{equation} \label{12}
\mathrm{Ass}(R/(I^t:\prod_{i=1}^sy_i))\subseteq \mathrm{Ass}(R/I^t) \subseteq \mathrm{Ass}(R/(I^t:\prod_{i=1}^sy_i)) \cup \mathrm{Ass}(R/(I^t, \prod_{i=1}^sy_i)). 
\end{equation}
By virtue of   $\mathfrak{p}\in \mathrm{Ass}(R/(I^t:\prod_{i=1}^sy_i))$, we deduce that   
$\mathfrak{p}\in \mathrm{Ass}(R/I^t)$. 

($\Rightarrow$) Assume that  $\mathfrak{p}\in \mathrm{Ass}(R/I^t)$.  To show  $\mathfrak{p}\in \mathrm{Ass}(R/(I^t:\prod_{i=1}^sy_i))$, we use induction  on $s$.  
Let $J:=I\setminus y_1$. Since  the generators of $J^t$ are precisely the generators of $I^t$ that are not divisible by $y_1$, we get  $(I^t,y_1)=(J^t,y_1)$. One can deduce  from our  assumption  that  $\mathfrak{p}\setminus y_1 \notin \mathrm{Ass}(R/J^t)$. Also, it follows from \cite[Theorem 5.5]{KHN2}
  that  $\mathfrak{p}\in \mathrm{Ass}(R/(J^t, y_1))$ if and only if $\mathfrak{p}=(\mathfrak{p}_1, y_1)$, 
where  $\mathfrak{p}_1 \in \mathrm{Ass}(R/J^t)$.
 This implies that  $\mathfrak{p}\notin \mathrm{Ass}(R/(J^t, y_1))$, and so   $\mathfrak{p}\notin \mathrm{Ass}(R/(I^t, y_1))$. By  virtue of (\ref{12}), we have   $\mathrm{Ass}(R/I^t) \subseteq \mathrm{Ass}(R/(I^t:y_1)) \cup \mathrm{Ass}(R/(I^t, y_1))$, and hence   
 $\mathfrak{p}\in \mathrm{Ass}(R/(I^t:y_1))$. 
 Consequently,  the claim is true for the case in which $s=1$. Now, assume  that the assertion has been shown  for a product of $s-1$ variables, and that $\mathfrak{p}\in \mathrm{Ass}(R/I^t)$. Set $M:=\prod_{i=1}^{s-1}y_i$. It follows from  the induction that  
$\mathfrak{p}\in \mathrm{Ass}(R/(I^t:M))$. Moreover,    
\begin{equation} \label{13}
 \mathrm{Ass}(R/(I^t:M))\subseteq \mathrm{Ass}(R/((I^t:M):y_s) )\cup \mathrm{Ass}(R/((I^t:M),y_s)).
\end{equation} 
Let  $L:=I\setminus y_s$. We show that  $((I^t:M),y_s)=((L^t:M),y_s).$ Since $L\subseteq I$, it is easy to see that 
  $((L^t:M),y_s) \subseteq ((I^t:M),y_s).$ To establish  the reverse inclusion,  pick  a monomial $u$ in 
$((I^t:M),y_s)$. If $y_s\mid u$, then $u\in  ((L^t:M),y_s),$ and the  proof  is over. If $y_s\nmid u$, then 
$u\in  (I^t:M),$ and hence $uM\in I^t$. This yields that there exists a monomial $f\in \mathcal{G}(I^t)$ such that $f\mid uM$. 
 On account of   $y_s\nmid u$ and $y_s \nmid M$, this gives  that $y_s\nmid f$, and so $f\in L^t$. Hence,  $u\in (L^t:M)$, and thus  $((I^t:M),y_s) \subseteq ((L^t:M),y_s).$ 
The  assumption implies that $\mathfrak{p}\setminus y_s \notin \mathrm{Ass}(R/L^t)$. As  $\mathrm{Ass}(R/(L^t:M)) \subseteq \mathrm{Ass}(R/L^t)$, one obtains   $\mathfrak{p}\setminus y_s \notin \mathrm{Ass}(R/(L^t:M))$. In  light of   \cite[Theorem 5.5]{KHN2}, we have    
 $\mathfrak{p}\in \mathrm{Ass}(R/((L^t:M), y_s))$  if and only if $\mathfrak{p}=(\mathfrak{p}_1, y_s)$, 
where  $\mathfrak{p}_1 \in \mathrm{Ass}(R/(L^t:M))$.  This leads  to 
$\mathfrak{p}\notin \mathrm{Ass}(R/((L^t:M), y_s))$, and so $\mathfrak{p}\notin\mathrm{Ass}(R/((I^t:M), y_s)).$ 
From    $(\ref{13})$, this yields  that   $\mathfrak{p}\in \mathrm{Ass}(R/((I^t:M): y_s))=\mathrm{Ass}(R/(I^t:\prod_{i=1}^sy_i))$. 
 This   completes the inductive  step, and thus   the claim has been proved by induction. 
\end{proof}


We are ready to state the first main result of this section.  

\begin{proposition}   \label{Lem.Distinct-Variables}
  Let   $I \subset R$ be a   monomial ideal, $\ell$ a positive integer, $v$ a square-free monomial in $R$ with $v\in I^\ell$,  and  $\mathfrak{p}$ a prime monomial ideal  such that  $\mathfrak{p}\setminus x_i \notin \mathrm{Ass}(R/(I\setminus x_i)^s)$ for all   $x_i\in \mathrm{supp}(v)$ and some positive integer $s$.  If $\mathfrak{p}\in \mathrm{Ass}(R/I^s)$, then $s>\ell$. 
 \end{proposition}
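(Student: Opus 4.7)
The plan is to derive this as a direct corollary of the preceding Proposition \ref{Pro.Distinct-Variables}. Since $v$ is square-free, I can write $\mathrm{supp}(v)=\{y_1,\dots,y_m\}$ so that $v=y_1\cdots y_m$. The hypothesis that $\mathfrak{p}\setminus x_i\notin\mathrm{Ass}(R/(I\setminus x_i)^s)$ for every $x_i\in\mathrm{supp}(v)$ is exactly the hypothesis of Proposition \ref{Pro.Distinct-Variables} applied to the variables $y_1,\dots,y_m$ with power $t=s$. Invoking that proposition yields the equivalence
\[
\mathfrak{p}\in\mathrm{Ass}(R/I^s)\ \Longleftrightarrow\ \mathfrak{p}\in\mathrm{Ass}\!\left(R/(I^s:\textstyle\prod_{i=1}^{m} y_i)\right)=\mathrm{Ass}\bigl(R/(I^s:v)\bigr).
\]

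From here the conclusion follows by contradiction. Assume $s\leq\ell$. Then $I^{\ell}\subseteq I^{s}$, so the assumption $v\in I^{\ell}$ forces $v\in I^{s}$, which is the same as $1\in(I^{s}:v)$. Hence $(I^{s}:v)=R$, and therefore $R/(I^{s}:v)=0$ has empty associated prime set. Combined with the equivalence above and the standing hypothesis $\mathfrak{p}\in\mathrm{Ass}(R/I^s)$, this is a contradiction. Consequently $s>\ell$.

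I do not anticipate a genuine obstacle: all the real work has already been absorbed into Proposition \ref{Pro.Distinct-Variables}, and the present statement merely repackages its colon-ideal criterion as a numerical inequality between the exponent $s$ at which $\mathfrak{p}$ appears in $\mathrm{Ass}(R/I^{s})$ and the smallest power $\ell$ of $I$ containing the square-free witness $v$. The only point requiring any care is to verify that the hypothesis translates correctly: the variables $y_1,\dots,y_m$ to which one applies Proposition \ref{Pro.Distinct-Variables} must be chosen as precisely the elements of $\mathrm{supp}(v)$, which is exactly what the statement provides, and the power there ($t$) must be taken equal to the present $s$.
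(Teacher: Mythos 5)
Your proof is correct and follows essentially the same path as the paper's: both invoke Proposition~\ref{Pro.Distinct-Variables} with $t=s$ and the variables of $\mathrm{supp}(v)$ to conclude $\mathfrak{p}\in\mathrm{Ass}(R/(I^s:v))$, then observe that $s\le\ell$ would force $v\in I^s$, making $(I^s:v)=R$ and yielding a contradiction. No difference in substance.
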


\begin{proof}
 Let $\mathfrak{p}\in \mathrm{Ass}(R/I^s)$.  Without loss of generality,  assume that $v=\prod_{i=1}^rx_i$. It follows from  
  Proposition \ref{Pro.Distinct-Variables} that    $\mathfrak{p}\in \mathrm{Ass}(R/(I^s:\prod_{i=1}^rx_i)).$    If $s\leq \ell$, then we deduce that 
     $(I^s:\prod_{i=1}^rx_i)=R$ since $v\in I^\ell$, which contradicts the fact that  $\mathfrak{p}\in \mathrm{Ass}(R/(I^s:\prod_{i=1}^rx_i)).$ This yields that  $s >\ell$, as claimed.   
 \end{proof}


\begin{corollary} \label{Cor.1}
 Let $I\subset  R$ be a square-free  monomial ideal,  $\mathfrak{p} \subset R$ be  a prime monomial ideal,  and  $\{u_1, \ldots, u_{\beta_1(I)}\}$  be a maximal independent set of minimal generators of $I$ such that  $\mathfrak{p}\setminus x_i \notin \mathrm{Ass}(R/(I\setminus x_i)^s)$ for all    $x_i\in \mathrm{supp}(\prod_{i=1}^{\beta_1(I)}u_i)$ and some positive integer $s$.  If $\mathfrak{p}\in \mathrm{Ass}(R/I^s)$, then $s\geq \beta_1(I)+1$. 
 \end{corollary}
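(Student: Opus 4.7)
The plan is to deduce this as a direct corollary of Proposition \ref{Lem.Distinct-Variables} by choosing the right square-free monomial $v$ and the right exponent $\ell$. Specifically, I would set
\[
v := \prod_{i=1}^{\beta_1(I)} u_i \quad \text{and} \quad \ell := \beta_1(I),
\]
and then invoke Proposition \ref{Lem.Distinct-Variables} with these choices.

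Before applying the proposition, I would verify its three hypotheses. First, $v$ must be a square-free monomial: since $I$ is a square-free monomial ideal, each $u_i \in \mathcal{G}(I)$ is square-free, and because $\{u_1, \ldots, u_{\beta_1(I)}\}$ is an independent set we have $\gcd(u_i, u_j) = 1$ for all $i \neq j$; hence the supports of the $u_i$ are pairwise disjoint and the product $v$ is again square-free. Second, $v \in I^{\ell}$ holds trivially since each $u_i \in I$ implies $\prod_{i=1}^{\beta_1(I)} u_i \in I^{\beta_1(I)}$. Third, by hypothesis $\mathrm{supp}(v) = \mathrm{supp}\bigl(\prod_{i=1}^{\beta_1(I)} u_i\bigr)$ is precisely the set of variables $x_i$ for which $\mathfrak{p}\setminus x_i \notin \mathrm{Ass}(R/(I\setminus x_i)^s)$, matching the hypothesis of Proposition \ref{Lem.Distinct-Variables} exactly.

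With these verifications in hand, Proposition \ref{Lem.Distinct-Variables} applies to the assumption $\mathfrak{p}\in \mathrm{Ass}(R/I^s)$ and yields $s > \ell = \beta_1(I)$, i.e., $s \geq \beta_1(I) + 1$, which is the desired conclusion.

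There is essentially no substantial obstacle: the corollary is a packaging of Proposition \ref{Lem.Distinct-Variables} in the square-free setting, where the maximal independent set provides a natural square-free monomial lying in a large power of $I$. The only point that requires a brief justification is square-freeness of $v$, which relies on combining the independence condition $\gcd(u_i,u_j)=1$ with the square-freeness of the individual $u_i$'s; everything else is immediate.
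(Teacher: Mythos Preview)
Your proposal is correct and matches the paper's intended argument: the paper states Corollary~\ref{Cor.1} without proof immediately after Proposition~\ref{Lem.Distinct-Variables}, so it is meant to follow by exactly the specialization $v=\prod_{i=1}^{\beta_1(I)}u_i$ and $\ell=\beta_1(I)$ that you describe. Your verification that $v$ is square-free (using both the square-freeness of each $u_i$ and the pairwise coprimeness from independence) is the only point needing justification, and you handle it correctly.
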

 

  In view of  Corollary \ref{Cor.1},  one may ask the following.

\begin{question} \label{Q.Distinct-Variables1}
 Let   $I \subset R$  be a  square-free  monomial ideal, $\mathfrak{p}$ a prime monomial ideal in $R$,   and  $\{u_1, \ldots, u_{\beta_1(I)}\}$  be a maximal independent set of minimal generators of $I$ such that  $\mathfrak{p}\setminus x_i \notin \mathrm{Ass}(R/(I\setminus x_i)^t)$ for all   $x_i\in \bigcup_{i=1}^{\beta_1(I)}\mathrm{supp}(u_i)$ and some positive integer $t$.  
 \begin{itemize}
 \item[(i)]  If  $t\geq \beta_1(I)+1$, then can we deduce that   $\mathfrak{p}\in \mathrm{Ass}(R/I^t)$?
\item[(ii)] If $\mathfrak{p}\in \mathrm{Ass}(R/I^t)$, then is it possible the  inequality $t\geq \beta_1(I)+1$  is strict?
 \end{itemize}
  \end{question}
  
  Before presenting the counterexample in the upcoming example, we first revisit the following result.

\begin{proposition}\label{Intersection}  (\cite[Proposition 3.3]{NQ})
Suppose that  $I$ and  $J$  are   two  normally torsion-free square-free monomial ideals in  $R$  such that  $\mathrm{supp}(I) \cap \mathrm{supp}(J)=\emptyset$.  Then $I\cap J=IJ$ is normally torsion-free.  
\end{proposition}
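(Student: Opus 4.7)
The plan is to use the standard reformulation that a square-free monomial ideal $L$ is normally torsion-free if and only if $L^{(k)} = L^k$ for every $k \geq 1$. Under this equivalence, it suffices to prove $(IJ)^{(k)} = (IJ)^k$ for all $k \geq 1$.

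First, I would establish the identity $I \cap J = IJ$. Since $\mathrm{supp}(I) \cap \mathrm{supp}(J) = \emptyset$, any monomial in $I \cap J$ is divisible both by a minimal generator of $I$ (a monomial supported in $\mathrm{supp}(I)$) and by a minimal generator of $J$ (a monomial supported in $\mathrm{supp}(J)$); these two generators are coprime, so their product must also divide the monomial, placing it in $IJ$. The reverse inclusion is immediate, and the same argument shows $IJ$ is again square-free. Next, I would determine $\mathrm{Min}(IJ)$: writing the minimal prime decompositions $I = \bigcap_{\mathfrak{p}\in \mathrm{Min}(I)} \mathfrak{p}$ and $J = \bigcap_{\mathfrak{q}\in \mathrm{Min}(J)} \mathfrak{q}$ and intersecting, the primes in $\mathrm{Min}(I)$ are generated by variables from $\mathrm{supp}(I)$ while those in $\mathrm{Min}(J)$ are generated by variables from $\mathrm{supp}(J)$; by disjointness no such pair is comparable, so the union is an irredundant minimal prime decomposition of $IJ$ in $R$, and $\mathrm{Min}(IJ) = \mathrm{Min}(I) \cup \mathrm{Min}(J)$.

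From the definition of symbolic powers this produces
\[
(IJ)^{(k)} \; = \; \bigcap_{\mathfrak{p}\in \mathrm{Min}(I)} \mathfrak{p}^k \;\cap\; \bigcap_{\mathfrak{q}\in \mathrm{Min}(J)} \mathfrak{q}^k \; = \; I^{(k)} \cap J^{(k)}.
\]
Invoking the hypothesis that $I$ and $J$ are normally torsion-free, I would then replace $I^{(k)}$ by $I^k$ and $J^{(k)} $ by $J^k$. Finally, noting that $I^k$ has support contained in $\mathrm{supp}(I)$ and $J^k$ in $\mathrm{supp}(J)$, the same coprime-divisibility argument used for $I \cap J = IJ$ applies verbatim to yield $I^k \cap J^k = I^k J^k = (IJ)^k$. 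Chaining these equalities gives $(IJ)^{(k)} = (IJ)^k$ for all $k \geq 1$, which is the desired normally torsion-freeness of $IJ$.

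The only delicate point is the identification of $\mathrm{Min}(IJ)$ inside $\mathrm{Spec}(R)$: one must verify that no prime in $\mathrm{Min}(I)$ is contained in a prime from $\mathrm{Min}(J)$ (or vice versa), which is exactly what the support-disjointness hypothesis guarantees. Everything else reduces to the elementary monomial identity $A \cap B = AB$ for monomial ideals $A$, $B$ on disjoint variable sets, applied once to $(I,J)$ and once to $(I^k,J^k)$.
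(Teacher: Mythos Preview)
Your proof is correct. The paper does not actually supply its own proof of this proposition; it merely cites \cite[Proposition 3.3]{NQ} for the result, so there is no in-paper argument to compare against. Your approach via the characterization $L$ normally torsion-free $\Leftrightarrow L^{(k)}=L^k$ for square-free $L$, together with the disjoint-support identity $A\cap B=AB$ applied to $(I,J)$ and to $(I^k,J^k)$, is a clean and complete argument.
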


\begin{example} {\em 
Let $I\subset K[x_1, \ldots, x_{11}]$ be  the following square-free monomial ideal 
\begin{align*}
I=(& x_1, x_2) \cap (x_2, x_3)\cap (x_3, x_4)\cap (x_4, x_5) \cap (x_5, x_1)  \cap (x_1, x_7)\cap (x_2, x_8)\cap (x_3, x_9) \\
&  \cap (x_4, x_{10})\cap (x_5, x_{11})\cap (x_6, x_1)\cap (x_6, x_2) \cap (x_6, x_3)\cap (x_6, x_4) \cap (x_6, x_5)\\
 = (& x_1x_2x_3x_4x_5,x_1x_2x_4x_6x_9x_{11},x_1x_3x_4x_6x_8x_{11},  x_1x_2x_3x_4x_6x_{11},x_1x_3x_5x_6x_8x_{10}, \\ 
& x_2x_3x_5x_6x_7x_{10},x_1x_2x_3x_5x_6x_{10}, x_2x_4x_5x_6x_7x_9,x_1x_2x_4x_5x_6x_9,x_1x_3x_4x_5x_6x_8, \\ 
& x_2x_3x_4x_5x_6x_7). 
 \end{align*}
  In fact, $I$ is the cover ideal of the below  Helm graph $G$.
  \begin{center}
\scalebox{1} 
{
\begin{pspicture}(0,-2.4629688)(6.022813,2.4629688)
\psdots[dotsize=0.2](2.7290626,0.98609376)
\psdots[dotsize=0.2](3.9090626,0.38609374)
\psdots[dotsize=0.2](1.5290625,0.36609375)
\psdots[dotsize=0.2](2.7090626,-0.03390625)
\psdots[dotsize=0.2](3.3090627,-0.81390625)
\psdots[dotsize=0.2](2.1090627,-0.81390625)
\psline[linewidth=0.024cm](2.7090626,1.0060937)(1.5090624,0.36609375)
\psline[linewidth=0.024cm](2.7090626,1.0060937)(3.8890626,0.40609375)
\psline[linewidth=0.024cm](3.8890626,0.40609375)(3.3090627,-0.7939063)
\psline[linewidth=0.024cm](2.6890626,0.02609375)(2.6690626,-0.03390625)
\psline[linewidth=0.024cm](2.7090626,-0.01390625)(2.7090626,1.0060937)
\psline[linewidth=0.024cm](2.6890626,0.02609375)(2.7290626,0.02609375)
\psline[linewidth=0.024cm](2.6890626,-0.01390625)(3.9090626,0.40609375)
\psline[linewidth=0.024cm](2.6890626,-0.01390625)(1.5490625,0.36609375)
\psline[linewidth=0.024cm](1.5490625,0.36609375)(1.5090624,0.36609375)
\psline[linewidth=0.024cm](1.5090624,0.40609375)(2.0890625,-0.7939063)
\psline[linewidth=0.024cm](2.0890625,-0.7939063)(2.7090626,-0.03390625)
\psline[linewidth=0.024cm](2.7090626,-0.01390625)(3.2890625,-0.77390623)
\psdots[dotsize=0.2](4.9290624,0.76609373)
\psdots[dotsize=0.2](4.089062,-1.6139063)
\psdots[dotsize=0.2](1.3090625,-1.5939063)
\psdots[dotsize=0.2](0.50906247,0.7860938)
\psdots[dotsize=0.2](2.7090626,1.9460937)
\psline[linewidth=0.024cm](2.7090626,1.9660937)(2.7090626,1.0060937)
\psline[linewidth=0.024cm](3.8890626,0.40609375)(4.9290624,0.76609373)
\psline[linewidth=0.024cm](4.089062,-1.6139063)(3.2890625,-0.75390625)
\psline[linewidth=0.024cm](1.2890625,-1.5539062)(1.2890625,-1.5939063)
\psline[linewidth=0.024cm](1.3090625,-1.5739062)(2.0890625,-0.81390625)
\psline[linewidth=0.024cm](0.4890625,0.80609375)(1.4890623,0.38609374)
\usefont{T1}{ptm}{m}{n}
\rput(2.3032813,1.1760937){$x_1$}
\usefont{T1}{ptm}{m}{n}
\rput(1.5432812,0.7160938){$x_2$}
\usefont{T1}{ptm}{m}{n}
\rput(1.6232813,-0.80390626){$x_3$}
\psline[linewidth=0.024cm](2.0895312,-0.7954686)(3.2695312,-0.7954686)
\usefont{T1}{ptm}{m}{n}
\rput(3.8623438,-0.72546864){$x_4$}
\usefont{T1}{ptm}{m}{n}
\rput(3.9023438,0.73453134){$x_5$}
\usefont{T1}{ptm}{m}{n}
\rput(2.9823437,0.35453135){$x_6$}
\usefont{T1}{ptm}{m}{n}
\rput(2.7223437,2.2745314){$x_7$}
\usefont{T1}{ptm}{m}{n}
\rput(0.40234375,1.1545314){$x_8$}
\usefont{T1}{ptm}{m}{n}
\rput(0.8023437,-1.5654685){$x_9$}
\usefont{T1}{ptm}{m}{n}
\rput(4.552344,-1.5454687){$x_{10}$}
\usefont{T1}{ptm}{m}{n}
\rput(5.272344,1.1145314){$x_{11}$}
\usefont{T1}{ptm}{m}{n}
\rput(2.6623437,-2.2854688){$G$}
\end{pspicture} 
}
\end{center}
 
 \bigskip
 
  It is not hard to see that  $\{u_1= x_1x_2x_3x_4x_5\}$  is  a maximal independent set of minimal generators of $I$, and so $\beta_1(I)=1$. Using {\it Macaulay2} \cite{GS}, we can derive that 
 $$\mathfrak{p}:=(x_1, x_2, x_3, x_4, x_5, x_6)\in \mathrm{Ass}(R/I^3)\setminus(\mathrm{Ass}(R/I)\cup \mathrm{Ass}(R/I^2)).$$
  On the other hand, direct computations give the following table:
 
 \begin{center}
 \begin{tabular}{|c|}
\hline
 $I\setminus x_1=(x_2) \cap  (x_3, x_4) \cap (x_5)  \cap (x_7)\cap (x_3, x_9)  \cap (x_4, x_{10})\cap (x_6)$\\
\hline
 $I\setminus x_2= (x_1) \cap (x_3)\cap  (x_8) \cap (x_4, x_5)   \cap (x_4, x_{10})\cap (x_5, x_{11})\cap (x_6)$\\
\hline
$I\setminus x_3=  (x_2)\cap (x_4) \cap (x_5, x_1)  \cap (x_1, x_7)\cap (x_9)   \cap (x_5, x_{11}) \cap (x_6)$\\
\hline
 $I\setminus x_4= (x_1, x_2) \cap (x_3)\cap (x_5) \cap (x_1, x_7)\cap (x_2, x_8)  \cap (x_{10})\cap  (x_6)$\\
\hline
 $I\setminus x_5=  (x_2, x_3)\cap (x_4) \cap (x_1)  \cap (x_2, x_8)\cap (x_3, x_9) \cap (x_{11})\cap (x_6)$\\
\hline
\end{tabular}
\end{center}
We verify that $I\setminus x_1$ is normally torsion-free. Since $ (x_3, x_4) \cap (x_3, x_9)  \cap (x_4, x_{10})$ can be viewed as the cover ideal of  the path graph $P$ with $V(P)=\{x_3, x_4, x_9, x_{10}\}$ and $E(P)=\{\{x_3, x_4\}, \{x_3, x_9\}, \{x_4, x_{10}\}\}$, and by  this fact that the cover ideal of any bipartite graph is normally torsion-free, we deduce that $ (x_3, x_4) \cap (x_3, x_9)  \cap (x_4, x_{10})$ is normally torsion-free. Now, 
Proposition \ref{Intersection} implies that $I\setminus x_1$ is normally torsion-free.  A similar argument can be used to show that $I\setminus x_i$ is normally torsion-free for all $i=2, \ldots, 5$. In particular, we can derive that $\mathfrak{p}\setminus x_i \notin \mathrm{Ass}((I\setminus x_i)^2)$ for all $i=1, \ldots, 5$.  Now, for  $t=2= \beta_1(I)+1$, we have  $\mathfrak{p}\notin \mathrm{Ass}(R/I^2)$, that is,  the converse of   Proposition  \ref{Lem.Distinct-Variables}  is not true in general.  This refutes  (i). 
On the other hand, it follows from  $\mathfrak{p}\in \mathrm{Ass}(R/I^3)\setminus  \mathrm{Ass}(R/I^2)$ that we must have $t>2$. Consequently,  the  inequality $t\geq \beta_1(I)+1$ in  Proposition   \ref{Lem.Distinct-Variables}  can be  strict, which is a positive answer to (ii). 
}
 \end{example}


 Along with Question \ref{Q.Distinct-Variables1}, we can ask the following.

\begin{question} \label{Q.NNTF3}
Let $I$  be a nearly  normally torsion-free square-free monomial ideal in   $R$   such that  there exist  some $m\geq 1$ and some prime monomial ideal $\mathfrak{q}$  such that $\mathrm{Ass}(I^t)=\mathrm{Min}(I)$ for all $1\leq t \leq m$, and $\mathrm{Ass}(I^t) \subseteq \mathrm{Min}(I) {\cup} \{\mathfrak{q}\}$ for all $t\geq m+1$.  Let also  $\{u_1, \ldots, u_{\beta_1(I)}\}$  be a maximal independent set of minimal generators of $I$.  If $\mathfrak{q}\in  \mathrm{Ass}(R/I^s)$ for some $s\geq m+1$, then can we deduce that  $s\geq \beta_1(I)+1$?
\end{question}

Next, we describe an ideal that provides a negative answer.
 
 \begin{example}
{\em 
 Let  $I\subset  K[x_1, \ldots, x_7]$ be the following square-free monomial ideal 
 \begin{align*}
 I=(&x_1, x_2, x_3) \cap (x_2, x_3, x_4) \cap (x_3,x_4, x_5) \cap (x_4, x_5, x_6)   \cap (x_5, x_6, x_7)\\
 &  \cap (x_6, x_7, x_1) \cap (x_7, x_1, x_2)\\
 = (&x_3x_6x_7,x_3x_5x_7,x_2x_5x_7,x_3x_4x_7,x_2x_4x_7,x_1x_4x_7,x_2x_5x_6,x_2x_4x_6,x_1x_4x_6, \\ 
 & x_2x_3x_6,x_1x_3x_6,x_1x_4x_5,x_1x_3x_5,x_1x_2x_5).  
  \end{align*}
 It is a common observation that the ideal $I$ dominates the odd cycle graph  $C_7$ with  
  $$E(C_7)=\{\{x_i, x_{i+1}\} : i=1, \ldots, 6\} \cup \{\{x_1, x_7\}\}.$$
 From \cite[Theorem 4.3]{NQ},   and in view of the proof of  \cite[Lemma 3.2]{NQKR}, we can deduce that $I$ is nearly normally torsion-free such that $$\mathrm{Ass}(I^s)=\mathrm{Min}(I) \cup \{(x_1, x_2, x_3, x_4, x_5, x_6, x_7)\}  \; \text{ for all } \; s\geq 2.$$   
  In addition, one can easily check    that  $\{u_1= x_1x_2x_5,  u_2=x_3x_6x_7\}$  is  a maximal independent set of minimal generators of $I$, and so $\beta_1(I)=2$.  With the notation of Question \ref{Q.NNTF3}, we detect that $m=1$, $(x_1, x_2, x_3, x_4, x_5, x_6, x_7)\in \mathrm{Ass}(I^2)$, while $2\ngeq \beta_1(I)+1$. 
}
\end{example}


In the rest of this section,   we focus on a  class of nearly  normally torsion-free square-free monomial ideals and show that every ideal in this class is normal.
 We start with  the following definition.

   \begin{definition}
  Let $I\subset R$  be a square-free monomial ideal. Then we say that $I$ is of  {\it well-nearly normally torsion-free type}  if  
  there exist  some $\ell, m\geq 1$ and some prime monomial ideal $\mathfrak{q}$  such that $\mathrm{Ass}(I^t)=\mathrm{Min}(I)$ for all $1\leq t \leq m$, and  $\mathrm{Ass}(I^t) \subseteq \mathrm{Min}(I) {\cup} \{\mathfrak{q}\}$ for all $t\geq m+1$ such that  if $\mathfrak{q}\in \mathrm{Ass}(I^s)$ for some $s\geq m+1$, then   $I^s=\bigcap_{\mathfrak{p}\in \mathrm{Min}(I)}\mathfrak{p}^s  \cap \mathfrak{q}^{\ell s}$   is a minimal primary decomposition of $I^s$. 
   \end{definition}
   
   
We demonstrate  a large class of  monomial ideals  of  well-nearly normally torsion-free type. 
  
 \begin{example}     {\em 
 Suppose that   $C_{2n+1}$ is  an odd cycle graph with the  edge set 
$$E(C_{2n+1})=\{\{x_i, x_{i+1}\} : i=1, \ldots, 2n\} \cup \{\{x_1, x_{2n+1}\}\}.$$
It has already been proved in \cite[Lemma 3]{KNT} that,  for all $s\geq 2$, we have  
$$J(C_{2n+1})^s= \bigcap_{i=1}^{2n+1}(x_i, x_{i+1})^s \cap \mathfrak{m}^{s(n+1)},$$ 
where $\mathfrak{m}=(x_1, \ldots, x_{2n+1})$ is the maximal ideal  in   $R=K[x_1, \ldots, x_{2n+1}]$. Notice that  $\mathfrak{q}=\mathfrak{m}$ and $\ell=n+1$. In other words, the cover ideals of odd cycle graphs are  of  well-nearly normally torsion-free type. 
}
 \end{example}

   
   Next, we are going to give a square-free monomial ideal  which is nearly 
   normally torsion-free, but  is not of well-nearly normally torsion-free type.

 \begin{example}{\em 
 Let  $S=K[x_1, x_2, x_3, x_4, x_5]$ and $R=S[x_6]$. Also, let $I\subset R$  be  the following square-free monomial ideal
 \begin{align*}
 I=&(x_1x_3x_5,x_1x_3x_4,x_1x_2x_4,x_2x_4x_5x_6,x_2x_3x_5x_6)\\
=& (x_1,x_2)  \cap  (x_2, x_3)  \cap  (x_3, x_4) \cap  (x_4, x_5) \cap (x_5, x_1) \cap (x_1, x_6).
 \end{align*}
 We claim that $I$ is nearly normally torsion-free and also is normal. 
 One can easily see that $I$ is the cover ideal of the following graph $G$. 
 \begin{center}
 \bigskip
\scalebox{1} 
{
\begin{pspicture}(0,-1.9729687)(4.6628127,1.9729687)
\psdots[dotsize=0.2](2.2409375,1.1545312)
\psdots[dotsize=0.2](1.5009375,-1.0454688)
\psdots[dotsize=0.2](3.0609374,-1.0254687)
\psdots[dotsize=0.2](3.6809375,0.35453126)
\psdots[dotsize=0.2](0.8609375,0.33453125)
\psdots[dotsize=0.2](3.4609375,1.7745312)
\psline[linewidth=0.04cm](2.2209375,1.1745312)(3.4609375,1.7545313)
\psline[linewidth=0.04cm](0.8609375,0.37453124)(2.2409375,1.1745312)
\psline[linewidth=0.04cm](2.2409375,1.1745312)(3.6809375,0.37453124)
\psline[linewidth=0.04cm](0.8409375,0.33453125)(1.4809375,-1.0454688)
\psline[linewidth=0.04cm](1.4809375,-1.0454688)(3.0809374,-1.0254687)
\psline[linewidth=0.04cm](3.0409374,-1.0054687)(3.6809375,0.35453126)
\usefont{T1}{ptm}{m}{n}
\rput(2.2623436,1.5245312){$x_1$}
\usefont{T1}{ptm}{m}{n}
\rput(0.40234375,0.32453126){$x_2$}
\usefont{T1}{ptm}{m}{n}
\rput(0.98234373,-1.0554688){$x_3$}
\usefont{T1}{ptm}{m}{n}
\rput(3.6623437,-0.99546874){$x_4$}
\usefont{T1}{ptm}{m}{n}
\rput(4.182344,0.36453125){$x_5$}
\usefont{T1}{ptm}{m}{n}
\rput(3.9623437,1.7845312){$x_6$}
\usefont{T1}{ptm}{m}{n}
\rput(2.2123437,-1.7954688){$G$}
\end{pspicture} 
}
\end{center}
It follows from \cite[Theorem 2.5]{NKA} that  $\mathrm{Ass}_{R}(R/J(G)^s)=\mathrm{Ass}_{S}(S/J(C_5)^s)\cup
\{(x_1, x_6)\}$ for all  $s\geq 1$, where $J(G)$ (respectively, $J(C_5)$) denotes the  cover ideal of the graph $G$ (respectively, odd cycle subgraph $C_5)$. 
In addition, we can deduce from  \cite[Proposition 3.6]{NKA}  that 
 $\mathrm{Ass}_S(S/J(C_5)^s)= \mathrm{Ass}_S(S/J(C_5))\cup \{(x_1, x_2, x_3, x_4, x_5)\}$  for all $s\geq 2$.
 We therefore have, for all $s\geq 2$, 
 \begin{align*}
& \mathrm{Ass}_{R}(R/I^s)=\mathrm{Ass}_{R}(R/J(G)^s)=\\
 & \{(x_1,x_2),   (x_2, x_3),   (x_3, x_4),   (x_4, x_5), (x_5, x_1),  (x_1, x_6),  (x_1, x_2, x_3, x_4, x_5)\}.
 \end{align*}
 This implies that $I$  is nearly normally torsion-free. We now turn our attention to show the normality of $I$. It follows from \cite[Theorem 1.10]{ANR} that 
 $J(C_5)$ is normal, and so  $I$ is normal  according to  \cite[Theorem 2.7]{ANKRQ}. In particular,  \cite[Theorem 6.2]{RNA}  yields that  $I$ satisfies  the strong persistence property. Here, our aim is to verify that, for all $t\geq 2$ and $\ell \geq 1$,  $I^t$ cannot be presented as below  
\begin{equation}\label{21}
 I^t =  (x_1,x_2)^t  \cap  (x_2, x_3)^t  \cap  (x_3, x_4)^t \cap  (x_4, x_5)^t \cap (x_5, x_1)^t  \cap (x_1, x_6)^t 
 \cap (x_1, x_2, x_3, x_4, x_5)^{\ell t}. 
\end{equation} 
 On the contrary, assume that $I^t$ can  be presented in  the above  format  for some $t\geq 2$ and $\ell \geq 1$. 
  The evaluation of $x_1, x_4$ at $0$ in both sides of the equality (\ref{21}), we get 
 $(x_2x_3x_5x_6)^tR= (x_2x_3x_5x_6)^tR \cap (x_2, x_3,  x_5)^{\ell t}$, and so $(x_2x_3x_5x_6)^t \in  (x_2, x_3,  x_5)^{\ell t}$. So, there exists 
 a monomial $u=x_2^{a} x_3^{b} x_5^{c} \in \mathcal{G}((x_2, x_3,  x_5)^{\ell t})$ such that $u \mid (x_2x_3x_5x_6)^t$, where $a,b,c \geq 0$ and 
 $a+b+c=\ell t$. From $u \mid (x_2x_3x_5x_6)^t$, we obtain $a\leq t$, $b\leq t$, and $c\leq t$. This  implies that $\ell t=a+b+c\leq 3t$, and hence  
 $\ell \leq 3$. It can be checked  that  
$$x_1^{t-1} x_2 x_3^{t-1} x_4 x_5^{t-1} x_6 \in  \bigcap_{i=1}^4 (x_i, x_{i+1})^t \cap (x_5, x_1)^t \cap (x_1, x_6)^t \cap (x_1, x_2, x_3, x_4, x_5)^{\ell t}.$$ 
We will show that $x_1^{t-1} x_2 x_3^{t-1} x_4 x_5^{t-1} x_6 \notin I^t$. Assume the contrary, then there are nonnegative integers $a_i$, where $i=1, \ldots, 5$,  with 
$a_1+a_2+a_3+a_4+a_5=t$   such that 
$$(x_1x_3x_5)^{a_1} (x_1x_3x_4)^{a_2} (x_1x_2x_4)^{a_3} (x_2x_4x_5x_6)^{a_4} (x_2x_3x_5x_6)^{a_5} \mid x_1^{t-1} x_2 x_3^{t-1} x_4 x_5^{t-1} x_6.$$
The total degree on the right is $3t$, while the total degree of the divisor is $3t+a_4+a_5$, hence, $a_1+a_2+a_3 =t$. 
Comparison of the degree of $x_1$ on each side leads to a contradiction.
 Consequently, $I$ is not of well-nearly normally torsion-free type. 
 }
 \end{example}


We are in a position to state the second main result of this section in the subsequent theorem. 

\begin{theorem}  \label{Normal+Well-NNTF}
 Let $I\subset R=K[x_1, \ldots, x_n]$  be   of  well-nearly  normally torsion-free  type. Then the following statements hold:
\begin{itemize}
\item[(i)]  $I$ is normal.
\item[(ii)]  $I$ has the strong persistence property. 
\item[(iii)]  $I$ has the persistence property. 
\item[(iv)]  $I$ has the symbolic strong persistence property.
\end{itemize}
\end{theorem}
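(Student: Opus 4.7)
The plan is to establish (i) directly by showing that every power $I^s$ is integrally closed, and then to read off (ii)--(iv) from the implications recalled in the introduction: normality $\Rightarrow$ SPP by \cite[Theorem 6.2]{RNA}, SPP $\Rightarrow$ PP, and SPP $\Rightarrow$ symbolic SPP by \cite{RT}.

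For (i), the basic observation is that whenever $\mathfrak{p}=(x_{i_1},\ldots,x_{i_r})$ is a monomial prime of $R$, its power $\mathfrak{p}^k$ is integrally closed: a monomial $u=x^\alpha$ lies in $\overline{\mathfrak{p}^k}$ iff $u^N\in\mathfrak{p}^{kN}$ for some $N\geq 1$, which reduces to $\sum_{j=1}^{r}\alpha_{i_j}\geq k$, equivalently $u\in\mathfrak{p}^k$. Since a finite intersection of integrally closed monomial ideals is integrally closed, it suffices to exhibit $I^s$ as such an intersection for every $s\geq 1$.

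I would split into two cases matching the dichotomy built into the definition of well-nearly normally torsion-free type. In the \emph{embedded case} ($s\geq m+1$ with $\mathfrak{q}\in\mathrm{Ass}(R/I^s)$), the definition itself furnishes
\[
I^s\;=\;\Bigl(\bigcap_{\mathfrak{p}\in\mathrm{Min}(I)}\mathfrak{p}^s\Bigr)\cap\mathfrak{q}^{\ell s},
\]
so $I^s$ is already an intersection of powers of monomial primes. In the \emph{non-embedded case} ($1\leq s\leq m$, or $s\geq m+1$ with $\mathfrak{q}\notin\mathrm{Ass}(R/I^s)$), we have $\mathrm{Ass}(R/I^s)=\mathrm{Min}(I)$, so $I^s$ has no embedded primes. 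Since $I$ is square-free, I then claim $I^s=\bigcap_{\mathfrak{p}\in\mathrm{Min}(I)}\mathfrak{p}^s$.

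The main obstacle---really a bookkeeping point rather than a hard step---is justifying this last equality. For each $\mathfrak{p}\in\mathrm{Min}(I)$, localization at $\mathfrak{p}$ gives $IR_{\mathfrak{p}}=\mathfrak{p}R_{\mathfrak{p}}$ because $I$ is square-free, whence $I^sR_{\mathfrak{p}}=(\mathfrak{p}R_{\mathfrak{p}})^s$; the (unique) $\mathfrak{p}$-primary component of $I^s$ is therefore $\mathfrak{p}^sR_{\mathfrak{p}}\cap R=\mathfrak{p}^s$. Combined with the absence of embedded primes, this yields the minimal primary decomposition $I^s=\bigcap_{\mathfrak{p}\in\mathrm{Min}(I)}\mathfrak{p}^s$. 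Thus in every case $I^s$ is an intersection of integrally closed monomial ideals, proving (i); statements (ii)--(iv) then follow by the implications cited above.
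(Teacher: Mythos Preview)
Your proof is correct and follows essentially the same approach as the paper: both arguments rest on the fact that, in either case of the dichotomy, $I^s$ decomposes as an intersection of powers of monomial primes, and such powers are integrally closed. The only cosmetic differences are that the paper carries out the element-chasing (taking $u\in\overline{I^s}$ with $u^z\in I^{sz}$ and pushing it into each $\mathfrak{p}^s$) rather than invoking ``intersection of integrally closed ideals is integrally closed,'' and it cites \cite[Proposition 4.3.25]{V1} for the symbolic-power formula where you argue directly via localization.
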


\begin{proof}  
(i)  To show the normality of $I$, one has to prove that $I^s=\overline{I^s}$ for all $s\geq 1$. To do this, fix $s\geq 1$. Since $I^s \subseteq \overline{I^s}$, it is enough for us to verify that $\overline{I^s} \subseteq I^s$. Let $u$ be an arbitrary monomial in $\overline{I^s}$. 
It follows at once  from \cite[Theorem 1.4.2]{HH1} that $u^z \in I^{sz}$ for some $z\geq 1$. Because $I$ is of  well-nearly normally torsion-free type,   this implies that    there exist  some $\ell, m\geq 1$ and some prime monomial ideal $\mathfrak{q}$  such that $\mathrm{Ass}(I^t)=\mathrm{Min}(I)$ for all $1\leq t \leq m$, and  $\mathrm{Ass}(I^t) \subseteq \mathrm{Min}(I) {\cup} \{\mathfrak{q}\}$ for all $t\geq m+1$ such that  if $\mathfrak{q}\in \mathrm{Ass}(I^k)$ for some $k\geq m+1$, then   $I^k=\bigcap_{\mathfrak{p}\in \mathrm{Min}(I)}\mathfrak{p}^k  \cap \mathfrak{q}^{k\ell}$   is a minimal primary decomposition of $I^k$. We can deduce  from    \cite[Proposition 4.3.25]{V1} that  $I^{(s)}= \bigcap_{\mathfrak{p}\in \mathrm{Min}(I)} \mathfrak{p}^{s}$, where $I^{(s)}$ denotes the $s$-th  symbolic power of $I$. We thus get  
$I^{s}=I^{(s)}= \bigcap_{\mathfrak{p}\in \mathrm{Min}(I)} \mathfrak{p}^{s}$
 or  $I^{s}=\bigcap_{\mathfrak{p}\in \mathrm{Min}(I)} \mathfrak{p}^{s} \cap \mathfrak{q}^{s\ell}$.  This implies that   $u^z\in  \mathfrak{p}^{sz}$ for all $\mathfrak{p}\in \mathrm{Min}(I)$, and so $u\in \overline{\mathfrak{p}^s}$ for all $\mathfrak{p}\in \mathrm{Min}(I)$. In addition, it follows from  \cite[Theorem 1.4.6]{HH1} that every prime monomial ideal is  normal. This gives that $u\in \mathfrak{p}^s$ for all $\mathfrak{p}\in \mathrm{Min}(I)$. Hence, $u\in \bigcap_{\mathfrak{p}\in \mathrm{Min}(I)} \mathfrak{p}^{s}$. If $u^z\in  \mathfrak{q}^{sz\ell}$, then $u \in \overline{\mathfrak{q}^{s\ell}}$, and \cite[Theorem 1.4.6]{HH1}  yields that $u\in \mathfrak{q}^{s\ell}$. Therefore, we obtain   $u\in I^s$, and so 
   $I$ is normal, as required. \par 
(ii) Since normality implies SPP, the claim follows readily from (i).\par 
(iii)-(iv)  Because  SPP implies PP and symbolic PP, both claims are derived from (ii).
\end{proof}


\section{An argument on the existence of   embedded associated prime ideals}

In this section, our aim is to give some results which are related to the existence of   embedded associated prime ideals  in the associated primes set of powers of monomial ideals. In particular, we show that these consequences can be employed in studying the edge and cover ideals of cones of graphs. 
We begin by stating the first main result of this section.

\begin{proposition}\label{Pro.Maximal1}
Let  $I \subset R=K[x_1,\ldots, x_n, x_{n+1}]$ be a monomial ideal, where $\mathrm{gcd}(u, x_{n+1})=1$ for all $u\in \mathcal{G}(I)$, and $\mathfrak{m}$
the ideal generated by the variables. Further, let   $h=fg$ with $1\neq f\in R, g\in \mathcal{G}(I)$ and $\mathrm{gcd}(h,x_{n+1})=1$,  and  $L:=x_{n+1}I+hR$.   If  $(x_1, \ldots, x_n)\in \mathrm{Ass}(I^t)$ 
for some  $t\geq 1$,  then   $\mathfrak{m}\in \mathrm{Ass}(L^{t+1})$.
\end{proposition}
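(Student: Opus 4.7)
The plan is to exhibit a monomial witness $W\in R$ with $(L^{t+1}:W)=\mathfrak{m}$, which is equivalent to $\mathfrak{m}\in\mathrm{Ass}(L^{t+1})$. Since $(x_1,\ldots,x_n)\in\mathrm{Ass}(I^t)$, choose a monomial $w$ with $(I^t:w)=(x_1,\ldots,x_n)$; because no generator of $I^t$ involves $x_{n+1}$, replacing $w$ by $w/x_{n+1}^a$ (where $a$ is its $x_{n+1}$-exponent) does not alter the colon ideal, so we may assume $\gcd(w,x_{n+1})=1$. The candidate witness will be
\[
W \;=\; w\,h\,x_{n+1}^{t}.
\]

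For the containment $\mathfrak{m}\subseteq(L^{t+1}:W)$, first consider $1\le i\le n$: from $x_iw\in I^t$ one may write $x_iw=u_1\cdots u_t\cdot w'$ with $u_j\in\mathcal{G}(I)$ and some monomial $w'\in R$, so that
\[
x_iW \;=\; (x_{n+1}u_1)\cdots(x_{n+1}u_t)\cdot h\cdot w'\;\in\;L^{t+1},
\]
since each $x_{n+1}u_j$ and $h$ lie in $L$. For $i=n+1$, the key observation is that $h=fg$ with $1\ne f$ and $\gcd(h,x_{n+1})=1$ forces $f$ to be a nontrivial monomial in $x_1,\ldots,x_n$; writing $f=x_jf'$ for some $j\le n$ gives $wf=(x_jw)f'\in I^t$, whence $wh=wfg\in I^{t+1}$, and therefore $x_{n+1}W=x_{n+1}^{t+1}wh\in(x_{n+1}I)^{t+1}\subseteq L^{t+1}$.

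The main obstacle, and the step requiring the most care, is establishing $W\notin L^{t+1}$. Every monomial generator of $L^{t+1}$ has the form $M=x_{n+1}^{t+1-k}vh^k$ with $v\in I^{t+1-k}$ and $0\le k\le t+1$. Assuming $M\mid W$, the exponent of $x_{n+1}$ in $W$ is exactly $t$, forcing $t+1-k\le t$, that is $k\ge 1$. Cancelling the $x_{n+1}$-powers and using that $v$, $h^k$, and $w$ are all coprime to $x_{n+1}$, the divisibility in the remaining variables reduces to $vh^{k-1}\mid w$. Since $h=fg\in I$, one obtains $vh^{k-1}\in I^{t+1-k}\cdot I^{k-1}=I^t$, whence $w\in I^t$, contradicting $(I^t:w)=(x_1,\ldots,x_n)\ne R$.

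Combining the two directions, $\mathfrak{m}\subseteq(L^{t+1}:W)\subsetneq R$; since $\mathfrak{m}$ is a maximal ideal, the only proper ideal containing it is itself, so $(L^{t+1}:W)=\mathfrak{m}$ and hence $\mathfrak{m}\in\mathrm{Ass}(L^{t+1})$.
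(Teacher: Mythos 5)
Your proof is correct and takes essentially the same approach as the paper: you use the identical witness $x_{n+1}^t w h$, the same observation that $f\neq 1$ and $\gcd(h,x_{n+1})=1$ force some $x_j$ ($j\le n$) to divide $f$ so that $wh\in I^{t+1}$, and the same decomposition $L^{t+1}=\sum_k x_{n+1}^{t+1-k}I^{t+1-k}h^k$. The paper computes $(L^{t+1}:x_{n+1}^t w h)$ directly by distributing the colon over this sum, while you verify $\mathfrak{m}\subseteq (L^{t+1}:W)\subsetneq R$ in two containment steps and invoke maximality of $\mathfrak{m}$; the content is the same.
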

\begin{proof}
Let  $(x_{1}, \ldots, x_{n})\in \mathrm{Ass}(I^t)$ for some $t\geq 1$. Hence, there exists some monomial $v$ such that 
 $(I^t:v)=(x_1, \ldots, x_n)$.  Since $x_{n+1}\notin \mathrm{supp}(I)$, we can assume  $\mathrm{gcd}(v,x_{n+1})=1$  according to  \cite[Lemma 2.1]{KHN2}. We claim that 
    $\mathfrak{m}=(L^{t+1} : x_{n+1}^tvh)$. To see this, we consider the following equalities:  
\begin{align*}
(L^{t+1} : x_{n+1}^tvh)=& (\sum_{\alpha=0}^{t+1}  x_{n+1}^{\alpha}I^{\alpha}h^{t+1-\alpha} :  x_{n+1}^tvh)\\
=& (x_{n+1}^{t+1}I^{t+1} :  x_{n+1}^tvh) + (x_{n+1}^{t}I^th : x_{n+1}^tvh) \\
& + \sum_{\alpha=0}^{t-1} (x_{n+1}^{\alpha}I^{\alpha}h^{t+1-\alpha} : x_{n+1}^tvh)\\
= & x_{n+1}(I^{t+1} : vh) + (I^t : v) + \sum_{\alpha=0}^{t-1} (I^{\alpha} h^{t-\alpha} : v).
\end{align*} 
Due to $h\in I$, we get  $(I^{\alpha} h^{t-\alpha} : v) \subseteq (I^t : v)$ for all $\alpha=0, \ldots, t-1$, and so 
$\sum_{\alpha=0}^{t-1} (I^{\alpha} h^ 
{t-\alpha} : v) \subseteq (I^t : v).$ In addition, since $x_i v\in I^t$ for all $i=1, \ldots, n$, $g\in \mathcal{G}(I)$,  and  
 $f\neq 1$, this implies that  $vh\in I^{t+1}$. Consequently, we have $(I^{t+1} : vh)=R$, and so 
$(L^{t+1}:x_{n+1}^tvh)=x_{n+1}R+(x_1, \ldots, x_n)=\mathfrak{m},$ that is,  $\mathfrak{m}\in \mathrm{Ass}(L^{t+1})$, as desired. 
\end{proof}


We are ready to give the second main result.

\begin{proposition}\label{Pro.Maximal2}
Assume that  $R, I,$ and $\mathfrak{m}$ are as in Proposition~\ref{Pro.Maximal1}. Moreover, let $I\ne \mathfrak{q}=(x_1, \ldots, x_n)$ and $L:=I+x_{n+1}\mathfrak{q}$.
  Suppose that   $(I^t:v)=\mathfrak{q}$ for some $t\geq 1$ and a monomial $v\in R$, such that  if  $x_\lambda v=f_1 \cdots f_t M$, where  $1\leq \lambda \leq n$ and  $M\in R, f_1, \ldots, f_t \in \mathcal{G}(I)$ are monomials, then $Mf_j/x_{\lambda}\neq 1$ for some $1\leq j \leq t$.  Then   $\mathfrak{m}\in \mathrm{Ass}(L^{t})$. 
\end{proposition}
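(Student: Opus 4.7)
My plan is to use $w := v$ itself as the witness for $\mathfrak{m} \in \mathrm{Ass}(L^t)$, establishing $(L^t : v) = \mathfrak{m}$ directly. As in the proof of Proposition \ref{Pro.Maximal1}, I first invoke \cite[Lemma 2.1]{KHN2} to assume $\gcd(v, x_{n+1}) = 1$, so $v \in K[x_1, \ldots, x_n]$. The structural starting point is the expansion
\[
L^t \;=\; \sum_{\alpha = 0}^{t} x_{n+1}^{\,t-\alpha}\,\mathfrak{q}^{\,t-\alpha}\,I^{\alpha},
\]
whose $\alpha = t$ summand is $I^t$ and whose $\alpha = t-1$ summand is $x_{n+1}\,\mathfrak{q}\, I^{t-1}$.

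Two facts fall out of this expansion for free. First, $v \notin L^t$: a monomial lacking $x_{n+1}$ can only sit in the $\alpha = t$ summand, but $v \notin I^t$ since $(I^t : v) = \mathfrak{q}$ is a proper colon. Second, $\mathfrak{q} \subseteq (L^t : v)$: the same colon equation yields $x_\lambda v \in I^t \subseteq L^t$ for every $\lambda \leq n$. The one remaining containment $x_{n+1} \in (L^t : v)$ reduces, via the $\alpha = t-1$ summand, to proving
\[
v \in \mathfrak{q}\, I^{t-1}.
\]

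To prove this membership, I would select $\lambda \in \{1, \ldots, n\}$ with $x_\lambda \notin \mathcal{G}(I)$—such $\lambda$ exists because every minimal generator of $I$ lies in $\mathfrak{q}$, forcing $I \subsetneq \mathfrak{q}$—and then examine any decomposition $x_\lambda v = f_1 \cdots f_t M$ supplied by $x_\lambda v \in I^t$. Because no $f_i$ equals $x_\lambda$, the hypothesized condition ``some $Mf_j / x_\lambda \neq 1$'' is automatic here. If $M = 1$, then $x_\lambda$ divides some $f_i$; writing $f_i = x_\lambda g$, the factor $g$ is a monomial in $\mathfrak{q}$ (nontrivial because $f_i \neq x_\lambda$), and so $v = g \prod_{k \neq i} f_k \in \mathfrak{q}\, I^{t-1}$. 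If $M = x_\mu M' \neq 1$, then $\mu = \lambda$ would give $v \in I^t$ (a contradiction), so $\mu \neq \lambda$; tracing where $x_\lambda$ divides on the right then produces $v \in x_\mu\, I^{t-1} \subseteq \mathfrak{q}\, I^{t-1}$.

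The main obstacle is the degenerate configuration $M = 1$ with some $f_j = x_\lambda$, which would collapse the needed free $\mathfrak{q}$-factor; the choice $x_\lambda \notin \mathcal{G}(I)$ bans this outright, and this is where $I \neq \mathfrak{q}$ enters decisively, interlocking with the decomposition hypothesis. Once $v \in \mathfrak{q}\, I^{t-1}$ is secured, combining it with $v \notin L^t$ and $\mathfrak{q} \subseteq (L^t : v)$ yields $(L^t : v) = \mathfrak{m}$, hence $\mathfrak{m} \in \mathrm{Ass}(L^t)$.
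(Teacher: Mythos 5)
Your proof is correct and follows the same skeleton as the paper's: both take $v$ itself as the witness, both expand $L^t = \sum_{\alpha=0}^{t} x_{n+1}^{t-\alpha}\mathfrak{q}^{t-\alpha}I^{\alpha}$, both observe that $\mathfrak{q}\subseteq (L^t:v)$ comes directly from $(I^t:v)=\mathfrak{q}$, and both reduce the remaining inclusion $x_{n+1}\in (L^t:v)$ to establishing $v\in \mathfrak{q}I^{t-1}$. Where you genuinely diverge is in how you obtain $v\in\mathfrak{q}I^{t-1}$. The paper invokes the stated decomposition hypothesis: it fixes the $\lambda$ it provides, writes $f_1=x_\lambda h$, and reads off $hM\neq 1$ from the condition $Mf_j/x_\lambda\neq 1$. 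You instead \emph{choose} $\lambda$ yourself so that $x_\lambda\notin\mathcal{G}(I)$ — such a $\lambda$ exists because every generator of $I$ lies in $\mathfrak{q}$ by the coprimality-with-$x_{n+1}$ setup, so $\mathcal{G}(I)\supseteq\{x_1,\ldots,x_n\}$ would force $I=\mathfrak{q}$ — and then the non-triviality $gM\neq 1$ falls out automatically, since $g=1$ would make $f_1=x_\lambda$ a generator. This is cleaner than the paper's route and in fact reveals that, given $I\neq\mathfrak{q}$, the decomposition hypothesis in the proposition's statement is redundant: a $\lambda$ satisfying it always exists. You also take the extra care of verifying $v\notin L^t$ (so that $(L^t:v)$ is proper, hence $\subseteq\mathfrak{m}$), a step the paper's colon computation glosses over; your version is the tighter argument.
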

\begin{proof}
First note that, based on  \cite[Lemma 2.1]{KHN2},  we can assume  $\mathrm{supp}(v) \subseteq \mathrm{supp}(I)$.  In particular,  from  $x_{n+1}\notin \mathrm{supp}(I)$, we have $\mathrm{gcd}(v,x_{n+1})=1$. It follows from our assumption that there exists some   $1\leq \lambda \leq n$ such that   if $x_\lambda v=f_1 \cdots f_t M$ for some monomials $f_1, \ldots, f_t \in \mathcal{G}(I)$ and monomial $M\in R$, 
 then $Mf_j/x_{\lambda}\neq 1$ for some $1\leq j \leq t$. Since $(I^t:v)=\mathfrak{q}$, this implies that $v\notin I^t$, and so $x_\lambda\nmid M$. Without loss of generality, assume that  $f_1=x_\lambda h$  for some monomial $h\in R$,  and hence we can write   $v=(f_2 \cdots f_t)hM$ with 
   $hM\neq 1$. This gives rise to  $v\in I^{t-1} \mathfrak{q}$. In  light of the binomial expansion theorem, we have the following     equalities  
\begin{align*}
(L^{t} : v)=& (\sum_{\alpha=0}^{t}  x_{n+1}^{t-\alpha}\mathfrak{q}^{t-\alpha}I^{\alpha} : v)\\
= &  (I^t:v) + \sum_{\alpha=0}^{t-2} x_{n+1}^{t-\alpha}(\mathfrak{q}^{t-\alpha}I^{\alpha} : v) + x_{n+1}(I^{t-1}\mathfrak{q}:v)\\
=& \mathfrak{q} + x_{n+1}R = \mathfrak{m}.
\end{align*} 
This implies that  $\mathfrak{m}\in \mathrm{Ass}(L^{t})$, and  the proof is  finished. 
\end{proof}


Here, we  provide  an  example, which shows that  the converse  of  Propositions \ref{Pro.Maximal1}    and  \ref{Pro.Maximal2}, respectively, is   not true in general.

\begin{example}
{\em 
(i) In $R=K[x_1, \ldots, x_8]$, consider  the square-free monomial ideal
\begin{align*}
L:=(& x_2x_5x_6x_8, x_1x_5x_6x_8, x_3x_4x_6x_8, x_1x_4x_6x_8, x_2x_3x_6x_8,  x_3x_4x_5x_8, x_2x_4x_5x_8,  \\
&  x_1x_3x_5x_8, x_1x_2x_4x_8, x_1x_2x_3x_8,  x_1x_2x_3x_4, x_1x_4x_5x_7x_8).
\end{align*}
Now, set $h:=fg$, where $g=x_1x_2x_3$ and $f=x_4$,  and 
\begin{align*}
I:=(& x_2x_5x_6,x_1x_5x_6,x_3x_4x_6,x_1x_4x_6,x_2x_3x_6, x_3x_4x_5,x_2x_4x_5,    x_1x_3x_5,x_1x_2x_4,  \\
&  x_1x_2x_3, x_1x_4x_5x_7).
\end{align*}
It is easy to see  that $L=x_8I+ hR$ satisfies the conditions in Proposition~\ref{Pro.Maximal1}. Using  {\it Macaulay2} \cite{GS}, we can deduce that $$(x_1, x_2, x_3, x_4, x_5, x_6, x_7, x_8)\in \mathrm{Ass}(L^4), \text{~but~}
(x_1, x_2, x_3, x_4, x_5, x_6, x_7)\notin \mathrm{Ass}(I^3).$$  

(ii) Consider the  ideal  $I=(x_1x_2, x_2x_3, x_3x_4, x_4x_5, x_5x_1)\in K[x_1, \ldots, x_6]$ and let  $L= I+x_6(x_1, x_2, x_3, x_4, x_5)$. 
Using  {\it Macaulay2} \cite{GS},  we find that 
$\mathfrak{m}\in \mathrm{Ass}(L^2)$, while $(x_1, x_2, x_3, x_4, x_5)\notin \mathrm{Ass}(I^2)$.
}
 \end{example}
 

In what follows, we present some applications of Propositions \ref{Pro.Maximal1}    and  \ref{Pro.Maximal2}.
 
 \begin{definition} (\cite[Definition 10.5.4]{V1})
  The {\it cone} $C(G)$, over the graph $G$, is obtained by adding a new vertex $w$ to $G$ and joining every vertex of $G$ to $w$. 
  \end{definition} 


 \begin{lemma} \label{Cone-Cover-Edge}
 Let   $G$  be  a  connected graph  and    $H:=C(G)$  be  its cone. Then the following statements hold:
 \begin{itemize}
 \item[(i)] If   $(x_i : i\in V(G))\in \mathrm{Ass}(J(G)^t)$ for some $t\geq 1$,  then  $(x_j : j\in V(H))\in \mathrm{Ass}(J(H)^{t+1})$, 
 where $J(G)$  and $J(H)$ denote the cover ideals of $G$ and $H$, respectively. 
 
 \item[(ii)]  If  $(x_i : i\in V(G))\in \mathrm{Ass}(I(G)^t)$  for some  $t\geq 2$,  then   $(x_j : j\in V(H))\in \mathrm{Ass}(I(H)^{t})$,  
 where $I(G)$   and $I(H)$ denote the edge ideals of $G$ and $H$, respectively. 
 \end{itemize}
  \end{lemma}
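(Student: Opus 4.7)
The plan is to deduce both parts directly from Propositions~\ref{Pro.Maximal1} and \ref{Pro.Maximal2}, respectively. The main work is to identify the cover/edge ideal of the cone with the ideal $L$ appearing in each proposition, after which the hypotheses reduce to what is already given.

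For part (i), let $x_{n+1}$ be the variable assigned to the apex $w$. Because every vertex of $G$ is joined to $w$, a minimal vertex cover of $H$ either contains $w$ (together with a minimal vertex cover of $G$) or equals $V(G)$, which gives
\[
J(H)=x_{n+1}J(G)+hR,\qquad h=\prod_{v\in V(G)}x_v.
\]
This matches the shape $L=x_{n+1}I+hR$ of Proposition~\ref{Pro.Maximal1} with $I=J(G)$. To realize the required factorization $h=fg$ with $g\in\mathcal{G}(J(G))$ and $f\ne 1$, I would use the observation that for a connected graph on at least two vertices the set $V(G)$ is never a minimal vertex cover: removing any vertex $v$ still leaves every edge of $G$ covered, because $v$ has a neighbor. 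Picking any minimal vertex cover $C\subsetneq V(G)$, set $g=\prod_{v\in C}x_v\in\mathcal{G}(J(G))$ and $f=\prod_{v\in V(G)\setminus C}x_v\ne 1$. All hypotheses of Proposition~\ref{Pro.Maximal1} are then in place, and the assumption $(x_i:i\in V(G))\in\mathrm{Ass}(J(G)^t)$ yields $(x_j:j\in V(H))\in\mathrm{Ass}(J(H)^{t+1})$.

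For part (ii), each edge of $H$ is either an edge of $G$ or of the form $\{w,v\}$ with $v\in V(G)$, giving
\[
I(H)=I(G)+x_{n+1}\mathfrak{q},\qquad \mathfrak{q}=(x_i:i\in V(G)),
\]
which is exactly the shape $L=I+x_{n+1}\mathfrak{q}$ of Proposition~\ref{Pro.Maximal2} with $I=I(G)$. The hypothesis $(x_i:i\in V(G))\in\mathrm{Ass}(I(G)^t)$ supplies a monomial $v$ with $(I(G)^t:v)=\mathfrak{q}$, and $I(G)\ne\mathfrak{q}$ since $I(G)$ is generated in degree two. For the side condition on factorizations $x_\lambda v=f_1\cdots f_tM$, every $f_j\in\mathcal{G}(I(G))$ has degree two, so $\deg(Mf_j)\ge 2>1=\deg(x_\lambda)$, forcing $Mf_j\ne x_\lambda$ and hence $Mf_j/x_\lambda\ne 1$ automatically; thus the condition is satisfied for any admissible $\lambda$. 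Proposition~\ref{Pro.Maximal2} then delivers $(x_j:j\in V(H))\in\mathrm{Ass}(I(H)^t)$.

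I do not anticipate a serious obstacle: the only substantive combinatorial remark is that, for a connected graph on at least two vertices, $V(G)$ is not itself a minimal vertex cover, and the key algebraic observation is that the degree-two nature of edge-ideal generators trivially verifies the technical condition of Proposition~\ref{Pro.Maximal2}. The restriction $t\ge 2$ in part (ii) merely reflects vacuity at $t=1$: since $V(G)$ is never a minimal vertex cover for such $G$, one has $\mathfrak{q}\notin\mathrm{Ass}(I(G))$, so the hypothesis cannot occur at $t=1$.
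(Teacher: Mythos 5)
Your proof is correct and takes essentially the same route as the paper: both parts identify $J(H)=x_{n+1}J(G)+hR$ with $h=\prod_{v\in V(G)}x_v$ and $I(H)=I(G)+x_{n+1}\mathfrak{q}$, respectively, and then invoke Propositions~\ref{Pro.Maximal1} and \ref{Pro.Maximal2}. Your only (harmless) deviations are to exhibit the factorization $h=fg$ explicitly via a proper minimal vertex cover in (i), and to verify the side condition in (ii) by a degree count showing $Mf_j\neq x_\lambda$ for every $j$, rather than the paper's observation that (with $\lambda=1$) $f_1=x_1x_r$ for some $r\neq1$ by connectedness.
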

 
 \begin{proof}
 (i)  Assume that  the  cone $H=C(G)$  is obtained by adding the new vertex $w$ to $G$ and joining every vertex of $G$ to $w$.
 Then one can easily see that 
 $$J(H)=J(G)\cap (x_w, \prod_{i\in V(G)}x_i)=x_wJ(G)+ \prod_{i\in V(G)}x_iR,$$ 
 where $R=K[x_i : i\in V(H)]$.  Since $G$  is connected,  we get  $\prod_{i\in V(G)}x_i \notin \mathcal{G}(J(G))$. Now,  the  claim is a direct consequence of   Proposition \ref{Pro.Maximal1}. 
 
 (ii) It is routine to check that  $I(H)=I(G)+ x_w(x_i : i\in V(G)).$ Suppose that  $(x_i : i\in V(G))\in \mathrm{Ass}(I(G)^t)$ for some $t\geq 2$, say $(I(G)^t:v)=(x_i : i\in V(G))$ for some monomial $v\in R$. Then there exist monomials $f_1, \ldots, f_t \in \mathcal{G}(I(G))$ and a monomial $M\in R$ such that $x_1v=f_1 \cdots f_t M$. Since $x_1 \nmid M$, without loss of generality, assume that $x_1\mid f_1$. Due to $f_1\in \mathcal{G}(I(G))$ and $G$ being  connected, there exists some positive integer  $1\neq r\in V(G)$ such that $f_1=x_1x_r$, and so $f_1M/x_1=x_rM\neq 1$.  As   $I(G)\neq (x_i : i\in V(G))$, the claim can be deduced immediately from  Proposition \ref{Pro.Maximal2}, and the proof is  finished. 
  \end{proof}


As an immediate consequence of  Lemma \ref{Cone-Cover-Edge}, we have the following  result. 

\begin{corollary}
Let $K_n$ denote the complete graph of order $n$.  Then the following statements hold:
\begin{itemize}
\item[(i)] $(x_1, \ldots, x_n)\in \mathrm{Ass}(J(K_n)^{t})$ for all $t\geq n-1$ and  $n\geq 2$. 

\item[(ii)]  $(x_1, \ldots, x_n)\in \mathrm{Ass}(I(K_n)^{t})$ for all $t\geq 2$ and $n\geq 3$. 
\end{itemize}
\end{corollary}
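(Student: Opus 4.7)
The cornerstone of the argument is the identity $K_{n+1}=C(K_n)$: the complete graph on $n+1$ vertices is precisely the cone obtained by joining a new vertex to every vertex of $K_n$. This makes Lemma~\ref{Cone-Cover-Edge} the natural inductive engine for both parts.

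For part (i), the plan is to induct on $n\geq 2$. The base case $n=2$ is immediate: $J(K_2)=(x_1,x_2)$ is the maximal ideal of $K[x_1,x_2]$, and its unique associated prime in every power is $(x_1,x_2)$, so the claim holds for $t\geq 1=n-1$. For the inductive step I would assume, as induction hypothesis, that $(x_1,\ldots,x_{n-1})\in\mathrm{Ass}(J(K_{n-1})^s)$ for every $s\geq n-2$. Given any $t\geq n-1$, set $s=t-1\geq n-2$, invoke the IH, and apply Lemma~\ref{Cone-Cover-Edge}(i) with $G=K_{n-1}$ and $H=C(G)=K_n$ to obtain $(x_1,\ldots,x_n)\in\mathrm{Ass}(J(K_n)^{s+1})=\mathrm{Ass}(J(K_n)^t)$. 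The induction closes without any separate appeal to a persistence statement, because the lemma advances $n$ and $t$ in lockstep.

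Part (ii) is more delicate because Lemma~\ref{Cone-Cover-Edge}(ii) preserves the exponent $t$, so the induction alone cannot amplify a single witness into a family. I would therefore settle the base case $n=3$ uniformly in $t$ by exhibiting, for each $t\geq 2$, the witness $v=x_1 x_2^{t-1} x_3^{t-1}$ and showing $(I(K_3)^t:v)=(x_1,x_2,x_3)$. The inclusions $x_1v=(x_1x_2)(x_1x_3)(x_2x_3)^{t-2}$, $x_2v=(x_1x_2)(x_2x_3)^{t-1}$, $x_3v=(x_1x_3)(x_2x_3)^{t-1}$ put each $x_iv$ into $I(K_3)^t$, while $\deg v=2t-1$ is strictly less than the minimal generator degree $2t$ of $I(K_3)^t$, so $v\notin I(K_3)^t$ and the colon is a proper monomial ideal; working in $K[x_1,x_2,x_3]$ forces equality with $(x_1,x_2,x_3)$. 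The induction on $n\geq 3$ is then a single clean application of Lemma~\ref{Cone-Cover-Edge}(ii) with $G=K_{n-1}$, $H=K_n$: the hypothesis $(x_1,\ldots,x_{n-1})\in\mathrm{Ass}(I(K_{n-1})^t)$ lifts directly to $(x_1,\ldots,x_n)\in\mathrm{Ass}(I(K_n)^t)$.

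The main anticipated obstacle is exactly the base case of (ii): because Lemma~\ref{Cone-Cover-Edge}(ii) does not advance the exponent, a witness constructed for one specific $t$ cannot be propagated through the induction on $n$, and one needs all powers handled simultaneously before starting the induction. The $t$-dependent witness $v=x_1x_2^{t-1}x_3^{t-1}$ circumvents this cleanly; alternatively, one could first establish the case $t=2$ by the simpler choice $v=x_1x_2x_3$ and then invoke the persistence property of edge ideals of simple graphs (\cite[Theorem 2.15]{MMV}) to reach every $t\geq 2$. Either route feeds into the inductive step identically.
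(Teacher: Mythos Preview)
Your proof is correct, and for both parts you take a somewhat different route than the paper.

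For (i), the paper first inducts on $n$ using Lemma~\ref{Cone-Cover-Edge}(i) only to reach the single exponent $t=n-1$, and then invokes normality of $J(K_n)$ (\cite[Corollary 1.7]{ANR}) to obtain the persistence property and hence all $t\geq n-1$. You instead strengthen the induction hypothesis to cover every $s\geq n-2$ from the outset; because Lemma~\ref{Cone-Cover-Edge}(i) advances both $n$ and the exponent by one, this closes the induction directly, with no appeal to normality or persistence. Your argument is thus entirely self-contained.

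For (ii), the paper establishes only the single case $(x_1,x_2,x_3)\in\mathrm{Ass}(I(K_3)^2)$, inducts on $n$ via Lemma~\ref{Cone-Cover-Edge}(ii) to get the claim at $t=2$ for all $n$, and then invokes the persistence property of edge ideals (\cite[Theorem 7.7.14]{V1}) to pass to all $t\geq 2$. Your primary route instead settles the base case $n=3$ for \emph{every} $t\geq 2$ via the explicit witness $v=x_1x_2^{t-1}x_3^{t-1}$, after which the induction on $n$ carries the full range of $t$ forward without any persistence input. (Your mentioned alternative---do $t=2$ first, then quote persistence---is exactly the paper's argument.)

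In short, the paper leans on external normality/persistence theorems while you trade these for a sharper induction hypothesis in (i) and a concrete $t$-dependent colon computation in (ii); both are valid, yours is more elementary, the paper's is terser.
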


\begin{proof} 
(i) By virtue of $K_n$  being  the cone of $K_{n-1}$  and  Lemma  \ref{Cone-Cover-Edge}, we can establish  by induction on $n$  that $(x_1, \ldots, x_n)\in \mathrm{Ass}(J(K_n)^{n-1})$. On the other hand, according to \cite[Corollary 1.7]{ANR}, we conclude  that $J(K_n)$ is a normal square-free monomial ideal, and so has the persistence property. This gives rise to  $(x_1, \ldots, x_n)\in \mathrm{Ass}(J(K_n)^{t})$ for all $t\geq n-1$  and  $n\geq 2$, as required. 

(ii) First note that  $(x_1, x_2, x_3) \in \mathrm{Ass}(I(K_3)^2)$. By  an argument similar to part (i), we obtain   
$(x_1, \ldots, x_n)\in \mathrm{Ass}(I(K_n)^{2})$. In addition, \cite[Theorem 7.7.14]{V1} yields that $I(K_n)$  has the persistence property, and consequently 
 $(x_1, \ldots, x_n)\in \mathrm{Ass}(I(K_n)^{t})$ for all $t\geq 2$  and  $n\geq 3$, as claimed.  
\end{proof}
We present  the following example, which is related to  Lemma \ref{Cone-Cover-Edge}(i). 
\begin{example}
{\em 
Let $G=C_{2n-1}$ denote the odd cycle graph of order $2n-1$, $n\geq 2$, with the vertex set $V(C_{2n-1})=\{1, \ldots, 2n-1\}$ and the edge set 
$$E(C_{2n-1})=\{\{i, i+1\} : i=1, \ldots, 2n-2\} \cup \{2n-1, 1\}.$$ Then the cone $H=C(C_{2n-1})$ over the graph $C_{2n-1}$ can be obtained by adding the new vertex $2n$ and joining every vertex of $C_{2n-1}$ to $2n$. It is obvious that  $H=W_{2n}$ is the wheel graph of order $2n$. 
On the other hand, we know from \cite[Propositions 3.6 and 3.10]{NKA} that $(x_1, \ldots, x_{2n-1})\in \mathrm{Ass}(J(C_{2n-1})^s)$ and 
$(x_1, \ldots, x_{2n-1}, x_{2n}) \in  \mathrm{Ass}(J(W_{2n})^{s+1}),$ where $s\geq 2$. 
}
\end{example}


Can we inquire if every nearly normally torsion-free monomial ideal is a normal ideal in general? The answer is no. 
   For example, consider the monomial ideal   $I=(x^5,x^4y, xy^4, y^5z)$  in $R=K[x,y,z]$. According to \cite[Example 2.8]{Nasernejad1}, we have 
$\mathrm{Min}(I)=\{(x,y), (x,z)\}$ and $\mathrm{Ass}(R/I^s)=\mathrm{Min}(I) \cup \mathrm\{(x,y,z)\}$ for all $s\geq 2$. This means that 
$I$ is  nearly normally torsion-free. Now, set $u:=x^4y^6$. It is easy to check that   
$u^2=x^{8}y^{12}=(x^5)(xy^4)(xy^4)(xy^4)\in I^4$, while $u\notin I^2$. Hence, $I^2$ is not integrally closed, and so $I$ is not normal.


Suppose that $I$ is   a normally torsion-free square-free  monomial ideal with $\mathcal{G}(I)=\{u_1, \ldots, u_m\}$. Assume that  we delete one of the members of $\mathcal{G}(I)$, say $u_m$, and add a new square-free monomial, say $v$. There is no guarantee that  the new square-free monomial
 ideal $(u_1, \ldots, u_{m-1})+(v)$ is still normally torsion-free.  
 To see a counterexample, let $C_9$ denote the odd cycle graph  of order $9$ with the   edge set 
$E(C_9)=\{\{i, i+1\}~:~i=1, \ldots, 8\}\cup \{\{9,1\}\}.$  
 Assume that  $L:=DI(C_9)$ is  the dominating ideal of $C_9$ in the polynomial ring $R=K[x_1, \ldots, x_9]$. Based on
    \cite[Example 2.2]{Nasernejad1}, $L$ is normally torsion-free. In particular, $x_3x_6x_9 \in \mathcal{G}(L)$. 
   Now, let  $J$ be  the square-free  monomial ideal with 
     $\mathcal{G}(J)=(\mathcal{G}(L)\setminus \{x_3x_6x_9\})\cup \{x_3x_6x_9x_5\}$. Using {\it Macaulay2} \cite{GS}, we obtain  $\#\mathrm{Ass}(J)=13$ while $\#\mathrm{Ass}(J^2)=18$. This means that $J$ is not normally torsion-free.

 In the following result, we show that under certain conditions the square-free monomial ideal $(u_1, \ldots, u_{m-1})+(v)$  is normally torsion-free as well.

\begin{theorem} \label{NTF3}
Let  $I$ be  a normally torsion-free square-free  monomial ideal in a polynomial ring $R=K[x_1, \ldots, x_n]$ over a field $K$ with 
$\mathcal{G}(I)=\{u_1, \ldots, u_m\}$. Let   $L:=(u_1, \ldots, u_{m-1})S + fx_{n+1}\cdots x_{n+t}S$, where $S=R[x_{n+1},\ldots, x_{n+t}]$, 
$f$ a square-free monomial  with  $f \mid u_m$,  $fx_{n+1}\cdots x_{n+t}\notin \mathfrak{p}^2$ for any $\mathfrak{p}\in \mathrm{Min}(L)$, and     $(u_1, \ldots, u_{m-1})S$  is normally torsion-free.  Then the following statements hold:
\begin{itemize}
\item[(i)] $L$  is normally torsion-free. 
\item[(ii)] $L$  is nearly normally torsion-free. 
\item[(iii)]  $L$ is normal.
\item[(iv)]  $L$ has the strong persistence property. 
\item[(v)]  $L$ has the persistence property. 
\item[(vi)]  $L$ has the symbolic strong persistence property.
\end{itemize}
\end{theorem}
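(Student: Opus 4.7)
The plan is to prove (i) directly, after which assertions (ii)--(vi) follow by standard implications: (ii) is immediate since any normally torsion-free ideal is trivially nearly normally torsion-free; (iii) follows from \cite[Theorem 1.4.6]{HH1} because $L$ is square-free; (iv) is \cite[Theorem 6.2]{RNA}; (v) follows from (iv) via \cite[Proposition 2.9]{N1}; and (vi) follows from (iv) as noted in the introduction (via \cite{RT}).

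To prove (i), I would show $\mathrm{Ass}(S/L^k) \subseteq \mathrm{Min}(L)$ for every $k\geq 1$. Setting $y := x_{n+1}\cdots x_{n+m}$, binomial expansion yields
\[
L^k \;=\; \sum_{j=0}^{k} J^{\,k-j}\, f^{\,j}\, y^{\,j}\, S,
\]
so a monomial $v = v_0 \prod_{i=1}^m x_{n+i}^{b_i}$ with $v_0\in R$ belongs to $L^k$ if and only if there exists $j \in \{0,\ldots,k\}$ with $j \leq \min_i b_i$ and $v_0 \in J^{\,k-j} f^{\,j}$. Now let $\mathfrak{p} = (L^k : v)$ be an associated prime with $v\notin L^k$, and set $b := \min_i b_i$. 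Examining when $x_{n+\ell}v \in L^k$, one sees that at most one fresh variable can lie in $\mathfrak{p}$: if $b$ is attained by two or more indices, then no $x_{n+\ell}$ is in $\mathfrak{p}$; whereas if $b$ is attained uniquely at some index $\ell_0$, only $x_{n+\ell_0}$ can lie in $\mathfrak{p}$, and this occurs precisely when $v_0 \in J^{\,k-b-1} f^{\,b+1}$.

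In the case where $x_{n+\ell_0}\in\mathfrak{p}$, I factor $v_0 = f^{\,b+1} v_0'$ with $v_0' \in J^{\,k-b-1}$ and compute $\mathfrak{p}\cap R$ by determining which variables $x\in R$ satisfy $xv \in L^k$. After cancelling powers of $f$ and invoking the identity $(J^m : h) = \bigcap_{\mathfrak{q}\in\mathrm{Min}(J)} \mathfrak{q}^{\,\max\{0,\,m - \deg_{\mathfrak{q}}(h)\}}$---valid because $J$ is normally torsion-free, so $J^m = J^{(m)}$---the goal is to identify $\mathfrak{p}\cap R$ with a single minimal prime $\mathfrak{q}_0 \in \mathrm{Min}(J)$. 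The hypothesis that $fy \notin \mathfrak{p}^2$ for every $\mathfrak{p}\in\mathrm{Min}(L)$, combined with $L\subseteq\mathfrak{p}$ (which holds because $L$ is square-free and $\mathfrak{p}$ is prime containing $L^k$), then forces $\mathfrak{q}_0 \cap \mathrm{supp}(f) = \emptyset$, so $\mathfrak{p} = (x_{n+\ell_0}) + \mathfrak{q}_0 \in \mathrm{Min}(L)$. A parallel argument handles the remaining case $\mathfrak{p}\subseteq R$, identifying $\mathfrak{p}$ with a minimal prime of $L$ of the form $(x) + \mathfrak{q}$ for some $x\in\mathrm{supp}(f)$.

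The main obstacle I anticipate is completing the colon computations in these two cases: although the formula for $(J^m : h)$ is clean once the normal torsion-freeness of $J$ is invoked, one must carefully show that the a priori complicated ``sum of colon ideals'' describing $\mathfrak{p}\cap R$ collapses to a single minimal prime of $J$. The hypothesis $fy\notin\mathfrak{p}^2$ is indispensable here---without it, $\mathfrak{p}\cap R$ could include variables from $\mathrm{supp}(f)$, producing embedded associated primes and contradicting the desired conclusion.
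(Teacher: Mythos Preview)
Your route is genuinely different from the paper's, and while the set-up (binomial expansion of $L^k$, analysis of which new variables lie in $\mathfrak{p}$) is sound, the heart of the argument is left as precisely the ``obstacle'' you flag. Saying that $\mathfrak{p}\cap R$ must collapse to a \emph{single} minimal prime $\mathfrak{q}_0$ of $J$ is tantamount to the statement that $\mathfrak{p}$ is not embedded, which is the whole theorem; you have not indicated any mechanism that forces this. Concretely, in your first case you need to rule out $\mathfrak{p}\cap R\supsetneq\mathfrak{q}_0$, and your sum-of-colons description $\sum_{j\le b}(J^{k-j}f^j:v_0)$ does not obviously reduce to a single $\mathfrak{q}_0$ even after invoking $J^{k-j}=\bigcap_{\mathfrak q}\mathfrak q^{k-j}$. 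In your second case (``$\mathfrak p\subseteq R$'') the asserted form $(x)+\mathfrak q$ is not a complete description of the minimal primes of $L$ lying in $R$: any $\mathfrak q\in\mathrm{Min}(J)$ already meeting $\mathrm{supp}(f)$ is itself a minimal prime of $L$, and primes of the shape $\mathfrak q+(x)$ with $x\in\mathrm{supp}(f)$ need not be minimal over $L$ when some other $\mathfrak q'\in\mathrm{Min}(J)$ with $x\in\mathfrak q'$ sits inside $\mathfrak q+(x)$.

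The paper avoids all of this by arguing by contradiction on the least $\ell$ for which $L^\ell$ has an embedded prime. The two ingredients are: first, the colon identity $(L^\ell:v)=L^{\ell-1}$ with $v=fx_{n+1}\cdots x_{n+m}$, proved from the hypothesis $v\notin\mathfrak p^2$ for every $\mathfrak p\in\mathrm{Min}(L)$ together with $L^{(\ell-1)}=L^{\ell-1}$; second, for an embedded prime $\mathfrak q=(L^\ell:z)$, the paper shows $v\mid z$ by checking that each deletion $L\setminus x_i$ (for $x_i\in\mathrm{supp}(v)$) is normally torsion-free---using $L\setminus x_i=I\setminus x_i$ when $x_i\in\mathrm{supp}(f)$ and $L\setminus x_\lambda=(u_1,\dots,u_{m-1})S$ when $x_\lambda$ is new---and then invoking \cite[Corollary~4.5]{SNQ}. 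Writing $z=vM$ yields $\mathfrak q=(L^{\ell-1}:M)\in\mathrm{Ass}(L^{\ell-1})$, contradicting minimality of $\ell$. This packages the combinatorics into two clean external lemmas rather than a direct colon computation; what your approach would buy, if the computation could be completed, is a more self-contained argument not relying on \cite{SN,SNQ}.
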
  
\begin{proof}
(i)  To simplify the notation, put  $v:= fx_{n+1}\cdots x_{n+t}$.  Suppose, on  the contrary,  that  $L$  is not normally torsion-free. Let  $\ell$ be minimal such that $L^\ell$ has embedded prime ideals.  Due to $L$ is a square-free monomial ideal, this gives that $\mathrm{Ass}(L)=\mathrm{Min}(L)$, and so  $\ell \geq 2$.  Pick  an arbitrary  element $\mathfrak{p}\in \mathrm{Min}(L)$. We want to show that $(L^{\ell}:v)=L^{\ell-1}$. It follows immediately from $v\in L$ that  $L^{\ell-1} \subseteq (L^\ell:v)$. Hence,  it is sufficient for us to verify  the reverse inclusion. To do this, take a  monomial $h$ in $(L^\ell:v)$. We thus get   $hv\in L^\ell$, and hence  there exist  monomials $g_1, \ldots, g_\ell \in \mathcal{G}(L)$ and some monomial $f'$ in $S$ such that $hv=g_1 \cdots g_\ell f'$.  One can conclude readily  from   $\mathfrak{p}\in \mathrm{Min}(L)$  that $hv\in \mathfrak{p}^\ell$. In addition, we deduce from the assumption    $v\notin  \mathfrak{p}^2$  that  $\mathfrak{p}$ contains exactly one variable that divides $v$. Consequently,  $h\in \mathfrak{p}^{\ell-1}$. Since   $\mathfrak{p}$ is arbitrary,  we get   $h\in \bigcap_{\mathfrak{p}\in \mathrm{Min}(L)}\mathfrak{p}^{\ell-1}$. It follows  from   \cite[Proposition 4.3.25]{V1} that  $h\in L^{(\ell-1)}$.  Thanks to  $\ell$ being  minimal such that $L^\ell$ has embedded prime ideals, this leads to  $L^{\ell-1}$ has no embedded prime ideals, and so $L^{(\ell-1)}=L^{\ell-1}$. We therefore have  $h\in L^{\ell-1}$. Accordingly, one has  $(L^{\ell}:v)\subseteq L^{\ell-1}$, and so   $(L^{\ell}:v)=L^{\ell-1}$. Suppose that  $\mathfrak{q}$ is  an embedded prime ideal of $L^\ell$. 
 This implies that  $\mathfrak{q}=(L^\ell:z)$ for some monomial $z\in S$.    We claim that  $\mathfrak{q}\setminus x_i\notin \mathrm{Ass}((L\setminus x_i)^\ell)$ for any $x_i\in \mathrm{supp}(f)$.  On the contrary, assume that  $\mathfrak{q}\setminus x_c\in \mathrm{Ass}((L\setminus x_c)^\ell)$ for some  $x_c\in \mathrm{supp}(f)$. Since $I$ is normally torsion-free,  we can derive from \cite[Theorem 3.21]{SN} that $I\setminus x_c$ is normally torsion-free. 
  Observe that the hypothesis  $\mathrm{supp}(f)\subseteq \mathrm{supp}(u_m)$  yields  that   $I\setminus x_i=L\setminus x_i$ for any $x_i\in \mathrm{supp}(f)$. This gives that  $I\setminus x_c=L\setminus x_c$,  and so $L\setminus x_c$ is normally torsion-free. We thus get  $\mathfrak{q}\setminus x_c\in \mathrm{Min}(L\setminus x_c)$, and hence $\mathfrak{q}\in \mathrm{Min}(L)$. 
 This leads to a contradiction. Consequently, one has $\mathfrak{q}\setminus x_i\notin \mathrm{Ass}((L\setminus x_i)^\ell)$ for any $x_i\in \mathrm{supp}(f)$. In view of \cite[Corollary 4.5]{SNQ}, we get  $x_i\mid z$ for any $x_i\in \mathrm{supp}(f)$, that is,  $f\mid z$.  Here, our aim is to demonstrate that $\mathfrak{q}\setminus x_\lambda\notin \mathrm{Ass}((L\setminus x_\lambda)^\ell)$ for any  $n+1\leq \lambda \leq n+t$. 
Assume, for the  sake of contradiction, that    $\mathfrak{q}\setminus x_\lambda\in \mathrm{Ass}((L\setminus x_\lambda)^\ell)$ for some $n+1\leq \lambda \leq n+t$. Since  $L\setminus x_\lambda=(u_1, \ldots, u_{m-1})S$  and $(u_1, \ldots, u_{m-1})S$ is normally torsion-free,  we have 
$L\setminus x_\lambda$ is normally torsion-free, and so $\mathfrak{q}\setminus x_\lambda\in \mathrm{Min}(L\setminus x_\lambda)$.  
This yields that  $\mathfrak{q}\in \mathrm{Min}(L)$, which is a contradiction.  
  Accordingly, we have $\mathfrak{q}\setminus x_\lambda\notin \mathrm{Ass}((L\setminus x_\lambda)^\ell)$ for any  $\lambda=n+1, \ldots,  n+t$. It follows again  from    \cite[Corollary 4.5]{SNQ} that $x_\lambda\mid z$ for any  $\lambda=n+1, \ldots,  n+t$, that is, $x_{n+1} \cdots x_{n+t} \mid z$.   This gives rise to  $z=vM$ for some monomial $M$. Hence,  $\mathfrak{q}=(L^\ell:z)=(L^{\ell-1}:M)$, and so   $\mathfrak{q}\in \mathrm{Ass}(L^{\ell-1})$,  which  contradicts  the fact that $L^{\ell-1}$ has no embedded prime ideals. This implies that  $L$ is normally torsion-free, as desired.  \par
 (ii) It is obvious from  the definition of nearly normally torsion-freeness and (i). \par 
(iii) Follows from (i).  \par 
Statements  (iv)-(vi)  are shown exactly as in  the  proof of  Theorem \ref{Normal+Well-NNTF}.    
  \end{proof}


The following   is an immediate consequence of  Theorem \ref{NTF3}.

\begin{corollary} \label{NTF4}
Let  $I$ be  a normally torsion-free square-free  monomial ideal in  $R=K[x_1, \ldots, x_n]$  with 
$\mathcal{G}(I)=\{u_1, \ldots, u_m\}$. Let   $L:=(u_1, \ldots, u_{m-1},gu_m)S$, where  $S=R[x_{n+1},\ldots, x_{n+t}]$, 
 $g$ be a square-free monomial in $S$ with  $\mathrm{supp}(g) \subseteq \{x_{n+1}, \ldots, x_{n+t}\}$,  $gu_m \notin \mathfrak{p}^2$  for any $\mathfrak{p}\in \mathrm{Min}(L)$, and     $(u_1, \ldots, u_{m-1})S$ be  normally torsion-free. Then the following statements hold:
\begin{itemize}
\item[(i)] $L$  is normally torsion-free. 
\item[(ii)] $L$  is nearly normally torsion-free. 
\item[(iii)]  $L$ is normal.
\item[(iv)]  $L$ has the strong persistence property. 
\item[(v)]  $L$ has the persistence property. 
\item[(vi)]  $L$ has the symbolic strong persistence property.
\end{itemize}
\end{corollary}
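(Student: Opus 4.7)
The plan is to deduce Corollary \ref{NTF4} directly from Theorem \ref{NTF3} by recognizing it as essentially that theorem applied with $f := u_m$, after a short polynomial-extension argument to reconcile the number of adjoined variables. The main idea is that with this choice, $gu_m = f \cdot \prod_{x_i \in \mathrm{supp}(g)} x_i$ takes precisely the shape of the extra generator featured in Theorem \ref{NTF3}.

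Concretely, I would set $k := |\mathrm{supp}(g)|$ and, after relabeling, assume $\mathrm{supp}(g) = \{x_{n+1},\ldots,x_{n+k}\}$, so $g = x_{n+1}\cdots x_{n+k}$. The degenerate case $k = 0$ collapses to $L = IS$, which inherits all six properties from $I$ via the standard fact that normally torsion-freeness, normality, and the (symbolic) (strong) persistence properties of monomial ideals are preserved under adjunction of free variables. For $k \geq 1$, I would introduce the intermediate ring $S' := R[x_{n+1},\ldots,x_{n+k}]$ and the ideal
\[
L' := (u_1,\ldots,u_{m-1})S' + u_m x_{n+1}\cdots x_{n+k}\,S' = (u_1,\ldots,u_{m-1},gu_m)S',
\]
so that $L = L'S$ is obtained from $L'$ by adjoining the remaining free variables $x_{n+k+1},\ldots,x_{n+m}$. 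The hypotheses of Theorem \ref{NTF3} for $L'$ are then straightforward to verify: $f = u_m$ is square-free and divides $u_m$; the condition $gu_m \notin \mathfrak{p}^2$ on $\mathrm{Min}(L')$ follows from the given condition on $\mathrm{Min}(L)$ because minimal primes correspond bijectively under polynomial extension by free variables; and the normally torsion-freeness of $(u_1,\ldots,u_{m-1})S'$ is equivalent to that of $(u_1,\ldots,u_{m-1})S$ for the same reason. Theorem \ref{NTF3} then yields the six conclusions for $L'$, and a final polynomial-extension step transfers them to $L = L'S$.

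The one delicate point — and the main obstacle I expect — is a bookkeeping matter: in Theorem \ref{NTF3} the number of adjoined variables is tied notationally to the number of generators of $I$ (both denoted $m$), whereas here I wish to invoke it with only $k \leq m$ new variables. A careful look at the proof of Theorem \ref{NTF3} reveals that this numerical equality is inessential; the only role played by the new variables is that each must appear in $v$, so that deleting any one of them recovers $(u_1,\ldots,u_{m-1})S'$. Thus Theorem \ref{NTF3} applies verbatim with $k$ new variables in place of $m$, and the corollary follows.
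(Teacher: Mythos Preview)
Your approach is correct and is essentially what the paper intends: the paper gives no proof at all beyond declaring the corollary ``an immediate consequence of Theorem \ref{NTF3},'' and your argument spells out precisely how that deduction goes, taking $f=u_m$. You have in fact been more careful than the paper by noticing and resolving the bookkeeping issue that Theorem \ref{NTF3}, as stated, adjoins exactly $m$ new variables (tied to $|\mathcal{G}(I)|$) while the corollary allows $\mathrm{supp}(g)$ to be a proper subset; your fix---pass to the intermediate ring $S'$ on the variables actually used by $g$, observe that the proof of Theorem \ref{NTF3} never uses the coincidence between the number of new variables and the number of generators, and then extend by free variables back to $S$---is exactly right and is presumably what the authors had in mind.
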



We close this section with a pair of  open questions.

\begin{question}  \label{Open-Question-1}
Let $I \subset R=K[x_1, \ldots, x_n]$ be a  square-free monomial ideal  and   $\mathfrak{q}$ be  a prime monomial ideal in $R$. Let  $L:=I \cap (\mathfrak{q}, x_r)$  such that   $x_r\notin \mathrm{supp}(\mathfrak{q}) \cup \mathrm{supp}(I)$ and  $\mathrm{Ass}(L)=\mathrm{Ass}(I) \cup \{(\mathfrak{q}, x_r)\}$.   If    $\mathfrak{m}=(x_1, \ldots, x_n)\in\mathrm{Ass}(L^s)$ for some $s\geq 2$, then can we deduce that   $\mathfrak{m}\setminus \{x_r\} \in  \mathrm{Ass}(I^s)$ or $\mathfrak{m}\setminus \{x_r\} \in  \mathrm{Ass}((I\cap \mathfrak{q})^s)$?  
\end{question}

\begin{question}  \label{Open-Question-2}
Let $I \subset R$ be a  square-free monomial ideal  and   $\mathfrak{q}$ be  a prime monomial ideal in $R$. Let  $L:=I \cap (\mathfrak{q}, x_r)$  such that   $x_r\notin \mathrm{supp}(\mathfrak{q}) \cup \mathrm{supp}(I)$ and  $\mathrm{Ass}(L)=\mathrm{Ass}(I) \cup \{(\mathfrak{q}, x_r)\}$.   If    $\mathfrak{p}=(L^s:x_r^{\theta}v)$ for some $s\geq 2$,   $\theta \geq 0$, and monomial $v$  with 
$x_r\nmid v$, then  can we deduce that $\mathfrak{p}\setminus  x_r \in  \mathrm{Ass}(I^{\theta}(I\cap \mathfrak{q})^{s-\theta})$?
\end{question}


\begin{remark}{\em 
(i) With the notation from   Question \ref{Open-Question-1}, it is possible  that we simultaneously have, for some $s\geq 2$,  $\mathfrak{m}=(x_1, \ldots, x_n)\in\mathrm{Ass}(L^s)$ and  $\mathfrak{m}\setminus \{x_r\} \in \mathrm{Ass}(I^{\theta}(I\cap \mathfrak{q})^{s-\theta})$  for all
  $0\leq \theta \leq s$. To see an example, consider the following square-free monomial ideal in $R=K[x_1, x_2, x_3, x_4, x_5, x_6]$, 
$$L=(x_3x_4,x_2x_4,x_3x_5x_6,x_1x_5x_6,x_1x_3x_6,x_1x_4x_5,x_2x_3x_5,x_1x_2x_5,x_1x_2x_3).$$
Using {\it Macaulay2} \cite{GS}, we obtain 
$$\mathrm{Ass}(L)=\{(x_1,x_2,x_3), (x_1,x_3,x_4), (x_1, x_4,x_5), (x_2,x_3,x_5), (x_3,x_4,x_5), (x_2,x_4,x_6)\}.$$
Let $\mathfrak{q}:=(x_2,x_4)$, $s=3$, $x_r=x_6$, $\mathfrak{m}=(x_1, x_2, x_3, x_4, x_5, x_6)$, and 
$$\mathrm{Ass}(I)=\{(x_1,x_2,x_3), (x_1,x_3,x_4), (x_1, x_4,x_5), (x_2,x_3,x_5), (x_3,x_4,x_5)\}.$$
 As  $x_1x_3\in I \setminus (I\cap \mathfrak{q})$, we get $I\cap \mathfrak{q}\neq I$. Using {\it Macaulay2} \cite{GS}, we deduce that   $\mathfrak{m}\in\mathrm{Ass}(L^3)$,    $\mathfrak{m}\setminus \{x_6\} \in  \mathrm{Ass}((I\cap \mathfrak{q})^3)$, 
  $\mathfrak{m}\setminus \{x_6\} \in  \mathrm{Ass}(I^2(I\cap \mathfrak{q}))$, $\mathfrak{m}\setminus \{x_6\} \in  \mathrm{Ass}(I(I\cap \mathfrak{q})^2)$, and   $\mathfrak{m}\setminus \{x_6\} \in  \mathrm{Ass}(I^3)$. 

(ii) With the notation from  Question \ref{Open-Question-1}, it is possible  that we  have, for some $s\geq 2$,  $\mathfrak{m}=(x_1, \ldots, x_n)\in\mathrm{Ass}(L^s)$ and $\mathfrak{m}\setminus \{x_r\} \notin \mathrm{Ass}(I^{\theta}(I\cap \mathfrak{q})^{s-\theta})$  for some  $0\leq \theta \leq s$.
  For instance, in  $R=K[x_1, \ldots, x_6]$,  let  
\begin{align*}
L=&(x_3x_6,x_2x_5,x_3x_4,x_2x_4,x_1x_4,x_2x_3,x_1x_3)\\
 =& (x_1, x_2, x_3) \cap  (x_2, x_3, x_4) \cap  (x_3, x_4, x_5) \cap  (x_1, x_2, x_4, x_6). 
\end{align*}
Assume that  $\mathfrak{q}:=(x_1, x_2, x_4)$, $s=2$, $x_r=x_6$, and  $\mathfrak{m}=(x_1, x_2, x_3, x_4, x_5, x_6)$, and 
$$\mathrm{Ass}(I)=\{(x_1, x_2, x_3), (x_2, x_3, x_4), (x_3, x_4, x_5)\}.$$
Using {\it Macaulay2} \cite{GS}, we  can  conclude  that  $\mathfrak{m}\in\mathrm{Ass}(L^2)$, 
$\mathfrak{m}\setminus \{x_6\} \notin  \mathrm{Ass}(I(I\cap \mathfrak{q}))$, and  $\mathfrak{m}\setminus \{x_6\} \in  \mathrm{Ass}((I\cap \mathfrak{q})^2)$, but    $\mathfrak{m}\setminus \{x_6\} \notin  \mathrm{Ass}(I^2)$. }
\end{remark}


\section{A comparison between   clutters and complement clutters}

Our aim in this section is to compare some algebraic properties of the edge ideals of  clutters and complement clutters. In particular, our motivation comes back  to Question 6.3 in \cite{NKRT}. Firstly,  we need to recall the notion of a clutter. A  {\it clutter} (or {\it simple hypergraph}) $\mathcal{C}$  with vertex set 
$X=\{x_1, \ldots, x_n\}$ is a family  of subsets of $X$, called {\it edges}, none of which is included in another. The edge ideal 
of a clutter, $I(\mathcal{C})$, is defined similarly as for a graph and is generated by the products of all the vertices in an edge, for every edge of 
$\mathcal{C}$. The cover ideal of a clutter can be defined as the ideal of all monomials $M$ such that given any edge $e$  of $\mathcal{C}$  there is some variable $x_i$ such that $x_i\in e$ and $x_i|M$.  Note that the vertices of these clutters become the variables of the ring to  which the edge ideal and  the  cover ideal are allocated. 
 Given a clutter $\mathcal{C}$ on $\{x_1, \ldots, x_n\}$ with edges $e_1, \ldots, e_r$, we define the {\it complement clutter}, denoted by  $\mathcal{C}^c$, as the clutter whose edges are 
$\{x_1, \ldots, x_n\}\setminus e_i$ for each $i=1, \ldots, r$. In particular, we have  $(\mathcal{C}^c)^c=\mathcal{C}$.

\begin{question} (\cite[Question 6.3]{NKRT}) \label{Q.6.3}
Does $I(\mathcal{C})$ have  the strong persistence property if and only if 
$I(\mathcal{C}^c)$ have  the strong persistence property?
\end{question}

The authors give an example of a clutter  $\mathcal{C}$, where neither $I(\mathcal{C})$ nor $I(\mathcal{C}^c)$   possesses SPP. Inspired by that, we formulate the following.

\begin{question} \label{Complement-Clutter}
\begin{itemize}
\item[(i)] Does $I(\mathcal{C})$ have  the  persistence property if and only if  $I(\mathcal{C}^c)$  has  the  persistence property?
\item[(ii)] Is $I(\mathcal{C})$   normal  if and only if  $I(\mathcal{C}^c)$ is normal?
\item[(iii)] Is  $I(\mathcal{C})$   normally torsion-free  if and only if  $I(\mathcal{C}^c)$ is normally torsion-free?
\end{itemize}
\end{question}

We will provide an example to give a negative answer to the first two questions, which in turn will be a new counterexample to Question \ref{Q.6.3}.

\textbf{Answer.}
(i)-(iii)  We construct an example showing that there is a clutter $\mathcal{C}$ such that $I(\mathcal{C})$ is normal (hence, has SPP and PP) but for which
$I(\mathcal{C}^c)$  does not have PP (hence, does not have SPP and is not normal).
Consider the  clutter
 \begin{align*}
\mathcal{C}:=\{&\{x_2, x_3, x_6\}, \{x_1, x_4, x_5, x_7\}, \{x_4, x_5, x_6, x_7\},\{x_1, x_3, x_6, x_7\},  \{x_3, x_5, x_6, x_7\},   \\ 
& \{x_1, x_3, x_4, x_7\}, \{x_1, x_2, x_6, x_7\}, \{x_1, x_2, x_5 , x_7\},   \{x_2, x_4, x_6, x_7\}, \{x_2, x_3, x_5, x_7\},\\
& \{x_2, x_3, x_4, x_7\}\}. 
\end{align*}
Its edge ideal is $I(\mathcal{C})=( x_2 x_3 x_6, x_1 x_4 x_5 x_7, x_4 x_5 x_6 x_7, x_1 x_3 x_6 x_7, x_3 x_5 x_6 x_7,  x_1 x_3 x_4 x_7, \allowbreak x_1 x_2 x_6 x_7,  x_1 x_2 x_5 x_7, x_2 x_4 x_6 x_7, x_2 x_3 x_5 x_7, x_2 x_3 x_4 x_7). $
 On the other hand, the edge ideal of the complement clutter $\mathcal C^c$ is given by 
 $I(\mathcal{C}^c)=(   x_1 x_4  x_5 x_7,  x_2 x_3 x_6, x_1 x_2 x_3,  x_2 x_4 x_5, \allowbreak x_1 x_2 x_4, x_2 x_5 x_6,   x_3 x_4 x_5, x_3 x_4 x_6,  x_1 x_3 x_5,      x_1 x_4 x_6,   x_1 x_5 x_6). $   

 Our first aim is to show that  $I(\mathcal{C})$ is normal. To do this, let    
$A:=x_3x_4(x_1,x_2)$ and $B:=(x_1x_2, x_1x_4, x_2x_3)$. Since $(x_1, x_2)$ is normal, we  can deduce from \cite[Remark 1.2]{ANR} that 
$A$ is normal. Now, let $P=(V(P), E(P))$ be the path graph with $V(P)=\{x_1, x_2, x_3, x_4\}$ and $E(P)=\{\{x_4, x_1\}, \{x_1, x_2\}, \{x_2, x_3\}\}$. 
Then $B$ is the edge ideal of $P$. Due to $P$  being  a bipartite graph, it follows 
 that $B$ is  normal. In addition, we have $A+B=B$, and so $A+B$ is normal. From  \cite[Theorem 3.1]{NQBM},  we  can derive that
 $C:=A+x_5B=(x_1x_3x_4, x_2x_3x_4, x_1x_2x_5, x_1x_4x_5, x_2x_3x_5)$ is normal. 
Let $D:=(x_4x_5, x_1x_3, x_3x_5, x_2x_4, x_1x_2)$ be the edge  ideal of     $G=(V(G), E(G))$,   the odd cycle  graph with the vertex set $V(G)=\{x_1, x_2, x_3, x_4,x_5\}$ and the  edge set $$E(G)=\{\{x_1, x_3\}, \{x_3, x_5\}, \{x_5, x_4\}, \{x_4, x_2\}, \{x_2, x_1\}\}.$$
On account of $D$ being  the edge ideal of $G$, we conclude from  \cite[Corollary 10.5.9]{V1} that $D$ is normal. Moreover, $C+D=D$, and hence $C+D$ is normal. It follows again from  \cite[Theorem 3.1]{NQBM} that 
\begin{align*}
F:=C+x_6D=(&x_1x_3x_4, x_2x_3x_4, x_1x_2x_5, x_1x_4x_5, x_2x_3x_5, x_4x_5x_6, x_1x_3x_6,\\
& x_3x_5x_6,  x_2x_4x_6, x_1x_2x_6),
\end{align*}
 is normal.  It is straightforward  to check that $I(\mathcal{C})=(x_2x_3x_6)+x_7F$.  Since $(x_2x_3x_6)$ is normal, it is enough to prove that $L:=(x_2x_3x_6)+F$ is normal.  To see this, let   $H=(V(H), E(H))$ be  the  graph with the vertex set $V(H)=\{x_1, x_2, x_3, x_4,x_5\}$ and the  edge set $E(H)=\{\{x_2,x_3\}, \{x_1, x_3\}, \{x_3, x_5\}, \{x_5, x_4\}, \{x_4, x_2\}, \{x_2, x_1\}\}.$ 
 Then it   follows from  \cite[Corollary 10.5.9]{V1} that $M:=I(H)$ is normal. Moreover, note that $L=C+x_6M$. In  light of $C+M=M$, we  obtain  $C+M$  to be  normal.  We  can now deduce from  \cite[Theorem 3.1]{NQBM} that $L=C+x_6M$ is normal.  We therefore get  $I(\mathcal{C})$
  to be normal and, hence, to have SPP and PP.

We now turn our attention to explore  $I(\mathcal{C}^c)$.  Using  {\it Macaulay2} \cite{GS}, we find that $$\mathfrak{m}=(x_1, x_2, x_3, x_4, x_5, x_6, x_7)\in \mathrm{Ass}(I(\mathcal{C}^c)^2) \setminus \mathrm{Ass}(I(\mathcal{C}^c)^3).$$  This shows that $I(\mathcal{C}^c)$ does not satisfy 
 PP, and, therefore, neither SPP nor  normality, as claimed. Hence, the answer is negative to each one of (i)-(iii). 

(iv)  In the following we  give a counterexample.  Consider the  clutter 
$$\mathcal{C}=\{\{x_1, x_2\}, \{x_1, x_3, x_6\}, \{x_4, x_5, x_6\}\},$$  and its  edge ideal  
$I(\mathcal{C})=(x_1x_2, x_1x_3x_6, x_4x_5x_6)$. In addition,  we have 
$I(\mathcal{C}^c)=(x_3x_4x_5x_6, x_1x_2x_3, x_2x_4x_5).$   

Our strategy  to show the normally torsion-freeness of $I(\mathcal{C})$ is to employ 
Theorem  3.7  in \cite{SNQ}. For this purpose, we first show that $x_1x_2\in \mathfrak{p}\setminus \mathfrak{p}^2$ for all 
$\mathfrak{p}\in \mathrm{Min}(I(\mathcal{C}))$, which is straightforward to check, since 
$$\mathrm{Ass}(I(\mathcal{C}))=\mathrm{Min}(I(\mathcal{C}))=\{(x_1,x_4), (x_1,x_5), (x_1, x_6), (x_2, x_6), (x_2, x_3, x_4), (x_2, x_3, x_5)\}.$$ 
 Next, it  is obvious that $I(\mathcal{C})\setminus x_1=(x_4x_5x_6)$ is normally torsion-free. Furthermore, \cite[Lemmas 2.5 and  3.12]{SN} implies  that $I(\mathcal{C})\setminus x_2=(x_1x_3x_6, x_4x_5x_6)=x_6(x_1x_3, x_4x_5)$ is normally torsion-free. 
 Consequently, since $I(\mathcal{C})\setminus x_1$ and $I(\mathcal{C})\setminus x_2$ are normally torsion-free, we deduce from  
 \cite[Theorem 3.7]{SNQ}  that  $I(\mathcal{C})$ is  normally torsion-free. 
  
  We now deal with  the clutter $I(\mathcal{C}^c)$. Using  {\it Macaulay2} \cite{GS}, we detect that 
   $$(x_2, x_3, x_4)\in \mathrm{Ass}(I(\mathcal{C}^c)^2)\setminus  \mathrm{Ass}(I(\mathcal{C}^c)).$$
   This means that $I(\mathcal{C}^c)$ is not normally torsion-free,  which  finishes our argument.


\section{Normally torsion-freeness and normality under polarization}

We continue our argument  with some  questions  which are  devoted to the  normally torsion-freeness and normality of monomial ideals under the polarization operator.  In the sequel, NTF will denote normally torsion-free.

\begin{definition}
 The process of  {\it  polarization}   replaces a power $x_i^t$ by a product of $t$ new variables $x_{(i,1)}\cdots x_{(i,t)}$. We call  
$x_{(i,j)}$ a  {\it shadow} of $x_i$. We will use $\widetilde{I^t}$ to denote the polarization of $I^t$, will use $S_t$ for the new polynomial ring in this polarization, and  will use $\widetilde{w} $ to denote the polarization in $S_t$ of a monomial $w$ in $R=K[x_1, \ldots, x_n]$. The {\it depolarization} of an ideal in $S_t$ is formed by setting $x_{(i,j)}=x_i$ for all $i,j$.
\end{definition}

In general, the polarization operator is more complicated in comparison with other monomial operators, and we pose the following questions with respect to it.

\begin{question} \label{POLARIZATION-NTF}
Let $I$ be a non-square-free monomial ideal in the polynomial ring $K[x_{1},\ldots ,x_{n}]$. Then can we deduce that 
\begin{itemize}
\item[(i)]   if  $\widetilde{I}$ has  the strong persistence property, then  $I$ has the  strong persistence property?
\item[(ii)]  if  $\widetilde{I}$ has  the  persistence property, then $I$ has the   persistence property?
\item[(iii)]  if  $\widetilde{I}$ is normal, then $I$ is normal? 
\item[(iv)] $I$ is   normally torsion-free if and only if   $\widetilde{I}$ is normally torsion-free? (\cite[Question 3.23]{SN})
\end{itemize}
\end{question}

Before we continue, there is a need to define the expansion operator from \cite{BH}, which will be employed as a criterion further on. \par

Let $R=K[x_1, \ldots , x_n]$. For a fixed ordered $n$-tuple $(i_1, \ldots , i_n)$ of positive integers, we consider the  polynomial ring
$$R^{(i_1,\ldots ,i_n)} =K[x_{11}, \ldots , x_{1i_1} , x_{21}, \ldots , x_{2i_2} , \ldots , x_{n1}, \ldots , x_{ni_n}].$$
Let $\mathfrak{p}_j$ be the monomial prime ideal $(x_{j1}, x_{j2}, \ldots , x_{ji_j}) \subseteq R^{(i_1,\ldots,i_n)}$  for all $j=1,\ldots,n$.  
Attached to each monomial ideal $I\subset R$ with a set of monomial generators $\{{\bold x}^{{\bold a}_1} , \ldots , {\bold x}^{{\bold a}_m}\}$, where ${\bold x}^{\bold a_i}={x_1}^{a_i(1)}\cdots {x_n}^{a_i(n)}$  and  $a_i(j)$ denotes the $j$-th component of the vector ${\bold a}_i=(a_i(1),\ldots,a_i(n))$ for all $i=1,\ldots,m$. We define the {\it expansion of I with respect to the n-tuple $(i_1, \ldots, i_n)$}, denoted by $I^{(i_1,\ldots,i_n)}$, to be the monomial ideal $$I^{(i_1,\ldots,i_n)} = \sum_{i=1}^m \mathfrak{p}_1^{a_i(1)}\cdots \mathfrak{p}_n^{a_i(n)}\subseteq R^{(i_1,\ldots,i_n)}.$$
 We simply write $R^*$ and $I^*$,
respectively, rather than $R^{(i_1,\ldots,i_n)}$ and $I^{(i_1,\ldots,i_n)}$.\par
For example, consider $R = K[x_1, x_2, x_3]$ and the ordered $3$-tuple $(1, 3, 2)$. Then
we have $\mathfrak{p}_1 = (x_{11})$, $\mathfrak{p}_2 = (x_{21}, x_{22}, x_{23})$, and $\mathfrak{p}_3 = (x_{31}, x_{32})$. So for the monomial ideal $I = (x_1x_2, x^2_3)$, the ideal $I^* \subseteq K[x_{11}, x_{21}, x_{22}, x_{23}, x_{31}, x_{32}]$ is $\mathfrak{p}_1\mathfrak{p}_2+\mathfrak{p}^2_3$, namely, 
$$I^* = (x_{11}x_{21}, x_{11}x_{22}, x_{11}x_{23}, x^2_{31}, x_{31}x_{32}, x^2_{32}).$$

\textbf{Answer.} (i)-(iii) Consider the  non-square-free monomial  ideal  
$$I=(x_1x_2^2x_3, x_2x_3^2x_4,  x_3x_4^2x_5, x_4x_5^2x_1, x_5x_1^2x_2) \subset R=K[x_1, \ldots, x_5].$$
  By virtue of  $(I^2:I)\neq I$ and   $\mathfrak{m}=(x_1,x_2,x_3,x_4,x_5)\in \mathrm{Ass}_R(R/I)\setminus \mathrm{Ass}_R(R/I^2)$, we deduce that $I$ does not satisfy  PP and, hence, neither SPP nor  normality.
   On the other hand, it has already been proved in \cite[Example 6.8]{RNA} that $\widetilde{I}$ is normal, and, hence, has SPP and PP.

  (iv) Consider the monomial ideal  $I=(x_1,x_2)\cap (x_1,x_2,x_3)^2 \cap (x_1,x_2,x_4)^4$ in $R=K[x_1, x_2, x_3, x_4]$. 
 We  will show that $I$ is normally torsion-free, while $\widetilde{I}$ is not.  
Consider $S = K[x_1, x_3, x_4]$ and the ordered $3$-tuple $(2,1,1)$. Then by considering   $\mathfrak{p}_1 = (x_1,x_2)$, $\mathfrak{p}_2 = (x_3)$, and
 $\mathfrak{p}_3 = (x_4)$,  for the monomial ideal  $F=(x_1)\cap (x_1,x_3)^2 \cap (x_1,x_4)^4\subset S$, we detect that  the ideal 
 $F^* \subset  R$ is $\mathfrak{p}_1\cap (\mathfrak{p}_1,\mathfrak{p}_2)^2\cap (\mathfrak{p}_1, \mathfrak{p}_3)^4$, namely,
$$F^*=(x_1,x_2)\cap (x_1,x_2,x_3)^2 \cap (x_1,x_2,x_4)^4=I.$$
  On account of \cite[Lemma 3.5]{N3}, we know that  $F^*$ is NTF  if and only if $F$ is NTF. Hence,  we will prove the latter.  
To do this, set  $J:=(x^3_1, x^2_1x_4, x_1x^2_4, x_3x^3_4)$. Then it is easy to see that $F=x_1J$. In  light of \cite[Lemma 3.12]{SN}, it is enough to show that  $J$ is NTF. Note that  $J=(x_1,x_3)\cap (x_1,x_4)^3$. 
  In what follows, we show that $J^s=(x_1,x_3)^s\cap (x_1,x_4)^{3s}$ for all $s\geq 1$.  To see this, fix $s\geq 1$. 
It is straightforward to check  that $J^s \subseteq (x_1,x_3)^s\cap (x_1,x_4)^{3s}$.  Hence, one has to prove the reverse inclusion. Take a monomial 
$f\in  \mathcal{G}((x_1,x_3)^s\cap (x_1,x_4)^{3s})$. According to \cite[Proposition 1.2.1]{HH1}, we get $f=\mathrm{lcm}(u,v)$ for some 
$u\in \mathcal{G}((x_1,x_3)^s)$ and $v\in \mathcal{G}((x_1,x_4)^{3s})$. Hence, there exist  integers $0\leq \lambda \leq s$ and $0 \leq \theta \leq 3s$ such that $u=x_1^{\lambda}x_3^{s-\lambda}$ and $v=x_1^{\theta}x_4^{3s-\theta}$. 
This implies that $f=x_1^{\mathrm{max}\{\lambda, \theta\}}x_3^{s-\lambda}x_4^{3s-\theta}$.
 Here, one may consider the following cases.

\textbf{Case 1.}  $0\leq \theta \leq \lambda$. Then $\max\{\lambda, \theta\}=\lambda$, and it  is easy to see that 
$$( x_1x^2_4)^{\lambda}(x_3x^3_4)^{s-\lambda} \mid f.$$

\textbf{Case 2.} $\lambda \leq \theta \leq 2\lambda$. Then $\max\{\lambda, \theta\}=\theta$, which  implies that 
$$(x^2_1x_4)^{\theta - \lambda} (x_1x^2_4)^{2\lambda - \theta} (x_3x^3_4)^{s-\lambda} \mid f.$$

\textbf{Case 3.}   $2\lambda \leq \theta \leq 3\lambda$.  Then $\max\{\lambda, \theta\}=\theta$, and we  can deduce that 
$$(x^3_1)^{\theta - 2\lambda}  (x^2_1x_4)^{3\lambda - \theta}(x_3x^3_4)^{s-\lambda} \mid f.$$

\textbf{Case 4.}    $3\lambda \leq \theta \leq 3s$.  Then $\max\{\lambda, \theta\}=\theta$. It is not hard to check that 
$$( x_1x^2_4)^{\theta}(x_3x^3_4)^{s-\theta} \mid f.$$
Consequently, we conclude that $f\in J^s$, as required. In particular, we get  
$$\mathrm{Ass}(J^s)=\mathrm{Ass}(J)=\{(x_1, x_3), (x_1, x_4)\}.$$
This yields that $J$ is NTF.  We therefore  obtain $I$ to be  normally torsion-free. On the other hand, using 
 {\it Macaulay2} \cite{GS}, we find that the polarization  of $I$ is  
 \begin{align*} 
 L:=\widetilde{I}=(&  z_{(0,0)}z_{(0,1)}z_{(0,2)}z_{(0,3)},z_{(0,0)}z_{(0,1)}z_{(0,2)}z_{(1,0)},z_{(0,0)}z_{(0,1)}z_{(1,0)}z_{(1,1)},\\
 &z_{(0,0)}z_{(1,0)}z_{(1,1)}z_{(1,2)},z_{(1,0)}z_{(1,1)}z_{(1,2)}z_{(1,3)},z_{(0,0)}z_{(0,1)}z_{(0,2)}z_{(3,0)},\\
& z_{(0,0)}z_{(0,1)}z_{(1,0)} z_{(3,0)},z_{(0,0)}z_{(1,0)}z_{(1,1)}z_{(3,0)},z_{(1,0)}z_{(1,1)}z_{(1,2)}z_{(3,0)},\\
 &  z_{(0,0)}z_{(0,1)}z_{(3,0)} z_{(3,1)},z_{(0,0)}z_{(1,0)}z_{(3,0)}z_{(3,1)},z_{(1,0)}z_{(1,1)}z_{(3,0)}z_{(3,1)},\\
& z_{(0,0)}z_{(2,0)}z_{(3,0)}z_{(3,1)}z_{(3,2)}, z_{(1,0)}z_{(2,0)}z_{(3,0)}z_{(3,1)}z_{(3,2)}),
\end{align*}
in       $S=K[z_{(0,0)},z_{(0,1)},z_{(0,2)},z_{(0,3)},z_{(1,0)},z_{(1,1)},z_{(1,2)},z_{(1,3)},z_{(2,0)},z_{(3,0)},z_{(3,1)},z_{(3,2)}].$ 
Using {\it Macaulay2} \cite{GS}, we detect that 
 \begin{align*}
 &\#\mathrm{Ass}(L)=19, ~ \#\mathrm{Ass}(L^2)=51,~\#\mathrm{Ass}(L^3)=85,~ \#\mathrm{Ass}(L^4)=126, \\
 & \#\mathrm{Ass}(L^5)= \#\mathrm{Ass}(L^6)=\#\mathrm{Ass}(L^7)=128.
 \end{align*}
 This means that  $\widetilde{I}$ is not normally torsion-free. 
 
 Now, our aim is to present a  monomial ideal $M$, which is not normally torsion-free, while  $\widetilde{M}$   is. 
 To do this, let  $M=(x_1^3x_2, x_1x_2^3, x_2^4, x_1^4x_3)$ in  $S=K[x_1, x_2, x_3]$. 
It is a straightforward operation to check that  $(x_1, x_2, x_3)\in \mathrm{Ass}(S/M^2)\setminus \mathrm{Ass}(S/M)$. 
This shows that  $M$ is NTF. 
  For convenience of notation, the polarization of $M$ is denoted by  
 $\widetilde{M}=(x_1x_2x_3x_5, x_1x_5x_6x_7, x_5x_6x_7x_8, x_1x_2x_3x_4x_9) \subset  K[x_1, \ldots, x_9].$ 
   Using  {\it Macaulay2} \cite{GS}, we deduce that 
 \begin{align*}
 \widetilde{M}&=(x_1,x_5)\cap (x_1,x_6)\cap  (x_1,x_7)\cap (x_1,x_8)\cap (x_2,x_5)\cap (x_2,x_6)\cap (x_2,x_7)\cap (x_3,x_5)\\
 &\cap (x_3,x_6)\cap (x_3,x_7)\cap (x_4,x_5)\cap  (x_5,x_9).
 \end{align*}
 Now, consider the following graph $G$. 
 
 \begin{center}

\scalebox{1.1}  
{
\begin{pspicture}(0,-1.3329687)(4.0428123,1.3329687)
\psdots[dotsize=0.2](1.3209375,0.83453125)
\psdots[dotsize=0.2](2.3209374,0.83453125)
\psdots[dotsize=0.2](3.2809374,0.81453127)
\psdots[dotsize=0.2](3.7209375,-0.16546875)
\psdots[dotsize=0.2](0.9209375,-0.16546875)
\psdots[dotsize=0.2](2.3009374,-0.14546876)
\psdots[dotsize=0.2](0.5009375,-0.78546876)
\psdots[dotsize=0.2](1.3009375,-0.78546876)
\psline[linewidth=0.04cm](0.9209375,-0.16546875)(0.5209375,-0.7654688)
\psline[linewidth=0.04cm](0.9009375,-0.16546875)(1.3009375,-0.78546876)
\psline[linewidth=0.04cm](0.9009375,-0.16546875)(1.3409375,0.83453125)
\psline[linewidth=0.04cm](1.3209375,0.8545312)(2.3009374,-0.12546875)
\psline[linewidth=0.04cm](1.3209375,0.87453127)(3.7009375,-0.14546876)
\psline[linewidth=0.04cm](3.2809374,0.81453127)(3.7009375,-0.12546875)
\psline[linewidth=0.04cm](2.3209374,0.83453125)(3.7009375,-0.12546875)
\psline[linewidth=0.04cm](2.3209374,-0.12546875)(3.2809374,0.81453127)
\psline[linewidth=0.04cm](0.9209375,-0.14546876)(2.3209374,0.83453125)
\psline[linewidth=0.04cm](2.3009374,0.81453127)(2.3009374,-0.10546875)
\psline[linewidth=0.04cm](0.9409375,-0.18546875)(0.9409375,-0.16546875)
\psline[linewidth=0.04cm](0.9209375,-0.16546875)(3.2609375,0.81453127)
\psdots[dotsize=0.2](0.5209375,0.83453125)
\psline[linewidth=0.04cm](0.5009375,0.8545312)(1.3209375,0.8545312)
\usefont{T1}{ptm}{m}{n}
\rput(0.39234376,1.1245313){$8$}
\usefont{T1}{ptm}{m}{n}
\rput(1.2923437,1.1445312){$1$}
\usefont{T1}{ptm}{m}{n}
\rput(2.3123438,1.1445312){$2$}
\usefont{T1}{ptm}{m}{n}
\rput(3.2723436,1.1245313){$3$}
\usefont{T1}{ptm}{m}{n}
\rput(0.57234377,-0.15546875){$5$}
\usefont{T1}{ptm}{m}{n}
\rput(2.2723436,-0.43546876){$6$}
\usefont{T1}{ptm}{m}{n}
\rput(3.7323437,-0.41546875){$7$}
\usefont{T1}{ptm}{m}{n}
\rput(0.23234375,-0.93546873){$4$}
\usefont{T1}{ptm}{m}{n}
\rput(1.5523437,-0.93546873){$9$}
\usefont{T1}{ptm}{m}{n}
\rput(2.2123437,-1.1554687){$G$}
\end{pspicture} 
}
  \end{center}
  
  It is straightforward to detect that $\widetilde{M}$ is the cover ideal of the graph $G$. Since $G$ has no odd cycle subgraph, this implies that $G$ is bipartite. It follows immediately  that the cover ideal of $G$ is NTF, and so $\widetilde{M}$ is normally torsion-free, as claimed.


\bigskip
\noindent{\bf Acknowledgments.}
First, the authors  are  deeply grateful to the anonymous referee for careful reading of the manuscript, and for  his/her  valuable suggestions which led to 
several  improvements in the quality of this paper. 
In addition, this paper was prepared when the first author  visited the Department of Mathematics of 
Uppsala University in 2023; in particular, he  would like to thank Uppsala 
University for its hospitality. Also, the second author was partially supported by the Lundstr\"om-\AA{m}an Foundation.


 
 \begin{itemize}
 \item \textsc{Mehrdad ~Nasernejad:  Univ. Artois, UR 2462, Laboratoire de Math\'{e}matique de  Lens (LML),   F-62300 Lens, France} 
 \item \textsc{Veronica Crispin Qui\~n{o}nez:  Department of Mathematics, Uppsala University, S-751 06, Uppsala, Sweden}
\item \textsc{Jonathan Toledo:  Tecnol\'o{g}ico Nacional de M\'e{x}ico, Instituto Tecnol\'o{g}ico Del Valle de Etla, Abasolo S/N, Barrio Del Agua Buena, Santiago
Suchilquitongo, 68230, Oaxaca, M\'e{x}ico}
 \end{itemize}

\end{document}